\title[Quasi-bialgebras from set-theoretic type solutions of the YBE]{Quasi-bialgebras from set-theoretic type solutions of the Yang-Baxter equation}
\author[A. Doikou, A. Ghionis and B. Vlaar]{Anastasia Doikou, Alexandros Ghionis and Bart Vlaar}
\address[A. Doikou and A. Ghionis] {Department of Mathematics, Heriot-Watt University,
Edinburgh EH14 4AS $\&$ Maxwell Institute for Mathematical Sciences, Edinburgh, UK}
\email{a.doikou@hw.ac.uk, ag181@hw.ac.uk}
\address[B. Vlaar] {Max Planck Institute for Mathematics
Vivatsgasse 7, 53111 Bonn, Germany}
\email{b.vlaar@hw.ac.uk}
\newcommand{\hiddenpower}[2] { \ifnum \numexpr#2=1 #1 \else #1^#2 \fi }
\numberwithin{equation}{section}
\def\be{\begin{equation}}
\def\ee{\end{equation}}
\def\ba{\begin{eqnarray}}
\def\ea{\end{eqnarray}}
\def\non{\nonumber}
\newcommand{\cal}{\mathcal}
\newcounter{diff_order}
\newcounter{diff_power}
\newcommand{\rawdiff}[3]
{
	% Get the order of the differential.
	\setcounter{diff_order}{0}
	\clist_map_inline:nn{#3}{\stepcounter{diff_order}}
	
	\frac{\hiddenpower{#1}{\thediff_order} #2}
	{
		\def\old_var{DefaultValue}
		\setcounter{diff_power}{0}
		
		% Iterate through the comma separated list in #3.
		\clist_map_inline:nn{#3}
		{
			\def\new_var{##1}
			% Skip the first one (as it's currently just DefaultValue).
			\ifnum \thediff_power=0
				\stepcounter{diff_power}
			\else
				% If you're the same as last time, count up. Else, print.
				\tl_if_eq:NNTF \new_var \old_var
				{\stepcounter{diff_power}}
				{
					#1 \hiddenpower{\old_var}{\thediff_power}
					\setcounter{diff_power}{1}
				}
			\fi

			\def\old_var{##1}
		}
		
		% Finally, print what's left.
		#1 \hiddenpower{\old_var}{\thediff_power}
	}
}
\newlength{\bibitemsep}\setlength{\bibitemsep}{.2\baselineskip plus .05\baselineskip minus .05\baselineskip}
\newlength{\bibparskip}\setlength{\bibparskip}{0pt}
\let\oldthebibliography\thebibliography
\renewcommand\thebibliography[1]{%
  \oldthebibliography{#1}%
  \setlength{\parskip}{\bibitemsep}%
  \setlength{\itemsep}{\bibparskip}%
}
\newtheorem{thm}{Theorem}[section]
\newtheorem{lemma}[thm]{Lemma}
\newtheorem{cor}[thm]{Corollary}
\newtheorem{pro}[thm]{Proposition}
\newtheorem{defn}[thm]{Definition}
\newtheorem{rem}[thm]{Remark}
\newtheorem{exa}[thm]{Example}
\newtheorem{conj}[thm]{Conjecture}
\begin{document}
\vskip 0.3in
\hfill
\begin{abstract}
We examine classes of quantum algebras emerging from involutive, non-degenerate 
set-theoretic solutions of the Yang-Baxter equation and their $q$-analogues. 
After providing some universal results on quasi-bialgebras and 
admissible Drinfeld twists we show that the quantum algebras produced from set-theoretic solutions and their 
$q$-analogues are in fact 
quasi-triangular quasi-bialgebras. Specific illustrative examples compatible with our generic findings are worked out.  
In the $q$-deformed case of set-theoretic solutions we also construct  admissible Drinfeld twists similar to the set-theoretic ones, 
subject to certain extra constraints dictated by the $q$-deformation. These findings greatly generalize recent  relevant results on set-theoretic solutions and their $q$-deformed analogues.\\
\\
{\it 2010 MCS: 16T20, 16T25, 17B37, 20G42}\\
{\it Keywords: Quasi-bialgebras,  set-theoretic Yang-Baxter equation,  Drinfeld twists}
\end{abstract}

\maketitle

\date{}
\vskip 0.2in

\section*{Introduction}\footnote{{\it \underline{Data availability statement}: Data sharing not applicable to this article as no datasets were generated or analysed during the current study.}}
\noindent 
The primary focus in this article is the investigation of classes of quantum algebras arising 
from set-theoretic solutions of the Yang-Baxter equation (YBE)  \cite{Baxter, Yang} 
and their $q$-deformed analogues,  and their connections to quasi-triangular quasi-bialgebras.
The problem of identifying  and classifying set-theoretic solutions of the Yang-Baxter equation 
was first  posed  by Drinfeld in early 90s
\cite{Drin} and ever since a considerable research activity has been developed on this topic 
(see for instance \cite{ESS, Eti, Hienta}, \cite{ABS, Papag, Veselov}). 
A lot of attention has been focused recently  on certain algebraic structures that generalize 
nilpotent  rings, called braces and
generate all involutive,  non-degenerate solutions of the YBE \cite{[25], [26]}.
Skew-braces on the other hand are used \cite{GV} 
to describe non-involutive set-theoretic solutions of the YBE,  and they may be instrumental 
in identifying universal $R$-matrices. This rising research field  has been particularly 
fertile and numerous relevant studies have 
been produced  over the past several years (see for instance
 \cite{bcjo, bach2, Catino, [6]},  \cite{DoiSmo1}-\cite{Doikoutw}, \cite{Gateva}-\cite{15}, 
\cite{Gobadi1, Gobadi2}, \cite{GV, JKA, Jesp2, LebVend}, \cite{Smo1}-\cite{LAA}).

Novel connections between set-theoretic solutions, quantum
integrable systems and the associated quantum algebras were uncovered in \cite{DoiSmo1, DoiSmo2}. 
More precisely, quantum groups associated to Baxterized solutions of the Yang-Baxter equation coming 
from braces were derived via the FRT construction \cite{FadTakRes},
new classes of quantum discrete integrable systems with periodic and open boundary conditions were produced and  the symmetries of these integrable systems were also identified.
Furthermore, the explicit forms of admissible twists for involutive set-theoretic solutions of the YBE were derived
and their admissibility was proven in \cite{Doikoutw}. Admissible twists for non-involutive 
set-theoretic solutions coming from skew braces were also subsequently introduced  in \cite{Gobadi2}.

The notion of an admissible twist ${\cal F}$, which links distinct  Hopf or quasi-Hopf algebras 
was originally introduced by Drinfeld in \cite{Drintw}. 
If the Hopf algebra
is in addition equipped with a quasi-triangular structure,  i.e.  a universal $R$-matrix exists, 
then the twisted Hopf algebra also has a quasi-triangular
structure.
In  Drinfeld's works such twists as well as the associated universal $R$-matrices are always considered 
to have semi-classical limits \cite{Drinfeld, Drintw},  i.e. 
they can be expressed as formal series expansions in powers of some deformation parameter, 
with the leading term being the unit element.
In the analysis of \cite{DoiSmo1, DoiSmo2, Doikoutw} on the other hand  Baxterized $R$-matrices coming from set-theoretic solutions 
of the Yang-Baxter equation were identified,  being of the form
$R(\lambda) = r+ {1\over \lambda} {\cal P}$, where $r$ is the set-theoretic solution of the
Yang-Baxter equation and ${\cal P}$ is the permutation operator. 
Interestingly, the $r$-matrix in this case does not contain any free parameter (deformation parameter), 
and consequently the $R$-matrix has no semi-classical analogue. 
A similar observation can be made about the associated admissible twist \cite{Doikoutw}.
This is a crucial difference between the studies in \cite{DoiSmo1, DoiSmo2, Doikoutw} and Drinfeld's analysis  \cite{Drintw}.
Moreover, whenever the notion of the  twist is discussed in Drinfeld's original work and in the literature in general
a quite restrictive action of the co-unit on the twist is almost always assumed, i.e.
 $(\mbox{id} \otimes \epsilon) {\cal F} = (\epsilon \otimes \mbox{id} ) {\cal F}=1_{\cal A}$ (${\cal A}$ is the associated quantum algebra).  It is however worth noting that in \cite{Drintw} Drinfeld describes how to use certain twists without this restricted counit action to twist quasi-bialgebras with nontrivial unit constraints to quasi-bialgebras to trivial constraints.  For our purposes here we allow quasi-bialgebras with nontrivial unit constraints.  We will further discuss this point in subsection 1.2.

It was shown in \cite{Doikoutw} that all involutive, set-theoretic solutions of the YBE 
can be obtained from the permutation operator via a suitable admissible twist that was explicitly derived.
The action of the co-unit on the twist  in this case was also identified and is given as 
$(\mbox{id} \otimes \epsilon) {\cal F} =1_{\cal A}$ and $(\epsilon \otimes \mbox{id} ) {\cal F}\neq 1_{\cal A}.$
Bearing these findings in mind, we relax in this study the restrictive condition
$(\mbox{id} \otimes \epsilon) {\cal F} = (\epsilon \otimes \mbox{id} ) {\cal F}=1_{\cal A}$
and consider a general scenario, which leads to  intriguing new results. 
One would expect that the quantum algebra emerging from any $R$-matrix that satisfies the Yang-Baxter equation
would  be a bialgebra. It was noted however in \cite{Doikoutw}, using a special class of set-theoretic solutions,
 that the corresponding quantum algebra was not co-associative and the related associator 
was derived. This was indeed the first hint that set-theoretic solutions of the YBE give rise to quasi-bialgebras.
In this article we greatly extend these findings after providing some universal results on 
quasi-bialgebras and admissible Drinfeld twists. 

More specifically, we describe below what is achieved in each of the subsequent sections.

\begin{enumerate}

\item In section 1 we show some new results on quasi-bialgebras and general admissible Drinfelds's 
twists that lay out the main frame
for studying quantum algebras arising from the set-theoretic and $q$-deformed set-theoretic solutions of the YBE.
More precisely, in subsection 1.1  before we present the main results we first recall fundamental definitions 
on quasi-bialgebras and quasi-triangular quasi-bialgebras.  We
then  show that given certain imposed conditions the non-associative version 
of the YBE reduces to the familiar YBE; however the underlying  
quantum algebra is still a quasi-bialgebra.

In subsection 1.2 we move on to study 
generic admissible twists by relaxing the conditions 
$(\mbox{id} \otimes \epsilon) {\cal F} = (\epsilon \otimes \mbox{id} ) {\cal F}=1_{\cal A}$ 
and we also examine the consequences of such a general choice.
We then give particular emphasis on cases where the twists send a  quasi-bialgebra to a bialgebra. 
These cases are relevant to the findings of sections 2 and 3. An explicit illustrative example is also 
worked out at the end of the subsection.

\item  In section 2 we focus on the quantum algebras emerging from involutive, non-degenerate 
set-theoretic solutions of the YBE. 
More specifically,  in subsection 2.1 we present some background information regarding 
set-theoretic solutions of the 
Yang-Baxter equation as well as a brief review on the recent findings of \cite{DoiSmo1}
on the links between set-theoretic solutions of the Yang-Baxter equation and quantum algebras.

In subsection 2.2 we review basic definitions and facts about the $\mathfrak{gl}_n$ Yangian, useful for the purposes of our analysis.
We first recall the definition of the $\mathfrak{gl}_n$ Yangian, 
which is most relevant in our present investigation \cite{Drinfeld, ChaPre}.  We then recall that the Yangian is a Hopf algebra and we comment on the action of the antipode after suitably twisting the algebra.

In subsection 2.3 we study 
set-theoretic solutions of the Yang-Baxter equation associated to quasi-bialgebras.
We first review some fundamental results on the admissible Drinfeld twist for involutive,
 set-theoretic solution of the YBE derived in \cite{Doikoutw}.
Specifically,  it  was shown in \cite{Doikoutw} that Baxterized set-theoretic  solutions are always 
coming from the $\mathfrak{gl}_n$
Yangian $R$-matrix via a suitable twist.  We show here a key proposition by introducing a family of group-like elements commuting with the Yangian $R$-matrix, which leads to the main statement that the respective 
twisting of the $\mathfrak{gl}_n$ Yangian yields a
quasi-triangular quasi-bialgebra in accordance to the findings of section 1.  
A special case of set-theoretic solution called the Lyubashenko solution \cite{Drin} is also presented as an illustrative example.

\item In section 3 we discuss the $q$-generalizations of set-theoretic solutions.  Although these generalizations  strictly speaking are not set-theoretic solutions of the Yang-Baxter equation 
they are certainly inspired by the results of \cite{DoiSmo2} and \cite{Doikoutw}.  First, in subsection 3.1 we provide basic definitions and information regarding the 
algebra ${\mathfrak U}_q(\widehat{\mathfrak{gl}_n})$ \cite{Drinfeld, Jimbo, Jimbo2} and we also comment on the action of the antipode after suitably twisting the algebra.

In subsection 3.2,  motivated by the set-theoretic solutions and the associated twists \cite{Doikoutw},  we generalize 
results regrading the twist of the ${\mathfrak U}(\widehat{\mathfrak{gl}_n})$ $R$-matrix.
We  exploit the admissible twists found for the set-theoretic solutions to derive generalized 
$q$-deformed solutions. The findings of this section greatly generalize the preliminary results of \cite{DoiSmo2} and produce a generic class of solutions, i.e. the $q$-deformed alanogues of the set-theoretic solutions. The $q$-analogue of Lyuabshenko's solution, first introduced in \cite{DoiSmo2}, 
is also discussed as an example of this construction.

\end{enumerate}

\section{Quasi-triangular quasi-bialgebras}

\noindent
In this section we recall fundamental definitions on quasi-bialgebras and quasi-triangular quasi-bialgebras 
(see also for instance \cite{Categorical, ChaPre, Drintw,  Kassel, Majid}),
 and we show various novel propositions on special classes of quasi-bialgebras as well as  on generic admissible Drinfeld twists.
Particular emphasis is given on Drinfeld twists that send a quasi-bialgebra to a bialgebra as this case is pertinent to the findings of sections 2 and 3.

\subsection{Quasi-bialgebras and the usual YBE}

\noindent Before we present the main results we first recall
some fundamental definitions on quasi-bialgebras and the associated notion of quasi-triangularity.
Throughout Section 1, $k$ is any field.  Later on we will restrict to the case $k=\mathbb{C}$.
It is worth noting that if ${\cal A}$ is a $ k$-algebra then ${\cal A}\otimes {\cal A}$ (in fact ${\cal A}^{\otimes n},\ n \in \{1, 2, 3, \ldots \}$)
is a $k$-algebra in a natural way. If $V$ and $W$ are ${\cal A}$-modules, then $V\otimes 
W$ is an ${\cal A} \otimes {\cal A}$-module and
not necessarily an ${\cal A}$-module. The following definition due to Drinfeld \cite{Drintw} provides a
general setup where the category $\mbox{Rep}({\cal A})$ of all ${\cal A}$-modules naturally has a tensor structure.
\begin{defn}{\label{definition1}} A quasi-bialgebra $\big ({\cal  A}, \Delta, \epsilon, \Phi, c_r, c_l \big )$ is a 
unital associative algebra ${\cal A}$ over some field $k$ with the following algebra homomorphisms:
\begin{itemize}
\item the co-product $\Delta: {\cal A} \to {\cal A} \otimes {\cal A}$
\item the co-unit $\epsilon: {\cal A} \to k$
\end{itemize}
together with the invertible element $\Phi\in {\cal A} \otimes {\cal A} \otimes {\cal A}$ 
(the associator) and the invertible elements $c_l, c_r \in {\cal A}$ (unit constraints),  such that:
\begin{enumerate}
\item $(\mbox{id} \otimes \Delta) \Delta(a)  = \Phi \Big ((\Delta \otimes \mbox{id}) \Delta(a)\Big ) \Phi^{-1},$ $\forall a \in {\cal A}.$
\item $\Big ((\mbox{id} \otimes \mbox{id} \otimes \Delta)\Phi \Big ) \Big ((\Delta \otimes \mbox{id} \otimes \mbox{id})\Phi \Big )= 
\Big (1\otimes \Phi \Big )  \Big ((\mbox{id} \otimes \Delta \otimes \mbox{id} )\Phi \Big ) \Big  (\Phi \otimes 1 \Big ).$
\item $(\epsilon \otimes \mbox{id})\Delta(a) = c_l^{-1} a c_l$ and $(\mbox{id} \otimes \epsilon)\Delta(a) = c_r^{-1} a c_r,$ 
$\forall a  \in {\cal A}.$
\item $(\mbox{id} \otimes \epsilon \otimes \mbox{id})\Phi = c_r \otimes c_l^{-1}.$
\end{enumerate}
\end{defn}
In the special case where $\Phi = 1 \otimes 1 \otimes 1$ one recovers a bialgebra, i.e.  co-associativity is restored.

Before we move on to the definition of a quasi-triangular quasi-bialgebra we first introduce some useful notation. 
Let $\sigma: {\cal A} \otimes {\cal A} \to {\cal A} \otimes {\cal A}$ be the 
``flip'' map, such that $a\otimes b \mapsto b \otimes a$ $\forall a,b \in {\cal A},$ 
then we set $\Delta^{(op)} := \sigma \circ \Delta.$ A quasi-bialgebra is called {\it cocommutative} if $\Delta^{(op)}= \Delta.$ 
We also  consider the general element $A = \sum_j a_j \otimes b_j \in {\cal A} \otimes {\cal A}$, then in the ``index'' notation we denote:
$A_{12} :=  \sum_j a_j\otimes b_j  \otimes 1,$ $A_{23} :=\sum_j 1\otimes   a_j\otimes b_j $ and $A_{13} :=  \sum_j a_j \otimes 1\otimes b_j .$ In fact, given an algebra ${\cal A}$ with homomorphisms $\Delta: {\cal A} \to {\cal A}\otimes {\cal A}$ and $\epsilon: {\cal A} \to k$, the conditions (1)-(4) listed in the Definition 1.1 are equivalent to the statement that the category of ${\cal A}$-modules,  equipped with the tensor product of the category of $k$-linear spaces,  is itself a tensor category (see e.g.  \cite[Proposition XV.1.2]{Kassel}).

By means of the axioms of Definition \ref{definition1} we can derive further counit formulas 
for the associator and unit constraints.
\begin{lemma}{\label{lemma0}}
Let  $\big ({\cal  A}, \Delta, \epsilon, \Phi, c_r, c_l \big )$ be a quasi-bialgebra, then:
\begin{eqnarray}
&& (\epsilon \otimes \mbox{id} \otimes \mbox{id})\Phi = \Delta(c_l^{-1})(c_l\otimes 1), ~~~~~~
(\mbox{id} \otimes \mbox{id} \otimes \epsilon)\Phi =(1 \otimes c_r^{-1})\Delta(c_r) \label{9}\\
&& \epsilon(c_l) = \epsilon(c_r) \label{10}
\end{eqnarray}
\end{lemma}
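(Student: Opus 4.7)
The principal tool is the pentagon axiom (condition (2) of Definition~\ref{definition1}). Each formula in~(\ref{9}) is obtained by applying the counit to a suitable interior slot of the fourfold tensor identity (2), and (\ref{10}) then follows by applying the counit once more to these derived identities together with axiom (4).

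For the first identity in~(\ref{9}), I apply $\mbox{id} \otimes \epsilon \otimes \mbox{id} \otimes \mbox{id}$ to both sides of (2). On the left, the factor $(\mbox{id} \otimes \mbox{id} \otimes \Delta)\Phi$ collapses to $(\mbox{id} \otimes \Delta)(c_r \otimes c_l^{-1}) = c_r \otimes \Delta(c_l^{-1})$ by axiom (4), while $(\Delta \otimes \mbox{id} \otimes \mbox{id})\Phi$ becomes $(c_r^{-1} \otimes 1 \otimes 1)\Phi(c_r \otimes 1 \otimes 1)$ by axiom (3). On the right, $(1 \otimes \Phi)$ contributes $1 \otimes P$ where $P := (\epsilon \otimes \mbox{id} \otimes \mbox{id})\Phi$; the middle factor $(\mbox{id} \otimes \Delta \otimes \mbox{id})\Phi$ contributes $(1 \otimes c_l^{-1} \otimes 1)\Phi(1 \otimes c_l \otimes 1)$ by axiom (3); and $\Phi \otimes 1$ contributes $c_r \otimes c_l^{-1} \otimes 1$ by axiom (4). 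Using that elements supported in disjoint tensor slots commute, both sides reduce to $(1 \otimes X)\Phi(c_r \otimes 1 \otimes 1)$, with $X = \Delta(c_l^{-1})$ on the left and $X = P(c_l^{-1} \otimes 1)$ on the right. Canceling the common invertible factor $\Phi(c_r \otimes 1 \otimes 1)$ yields $P = \Delta(c_l^{-1})(c_l \otimes 1)$. The symmetric computation with $\mbox{id} \otimes \mbox{id} \otimes \epsilon \otimes \mbox{id}$ applied to (2) gives the second identity $(\mbox{id} \otimes \mbox{id} \otimes \epsilon)\Phi = (1 \otimes c_r^{-1})\Delta(c_r)$ along the same lines.

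For (\ref{10}), I apply $\epsilon$ to each of the three slots of axiom (4) and of the first identity of (\ref{9}), obtaining two expressions for the scalar $(\epsilon \otimes \epsilon \otimes \epsilon)\Phi$. From (4) this scalar equals $\epsilon(c_r)\epsilon(c_l^{-1}) = \epsilon(c_r)/\epsilon(c_l)$, since $\epsilon$ is an algebra homomorphism. From the first identity of (\ref{9}) it equals $(\epsilon \otimes \epsilon)[\Delta(c_l^{-1})(c_l \otimes 1)] = \epsilon(c_l^{-1})\epsilon(c_l) = 1$, where I use the auxiliary relation $(\epsilon \otimes \epsilon)\Delta(a) = \epsilon(a)$ obtained by applying $\epsilon$ once more to axiom (3). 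Equating the two expressions gives $\epsilon(c_r) = \epsilon(c_l)$.

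The main challenge is bookkeeping: identifying how Sweedler expressions like $\sum \phi^1_{(1)} \otimes \epsilon(\phi^1_{(2)})\phi^2 \otimes \phi^3$ collapse via axiom (3) into conjugations of $\Phi$ such as $(c_r^{-1} \otimes 1 \otimes 1)\Phi(c_r \otimes 1 \otimes 1)$, and exploiting commutativity across disjoint tensor factors to bring both sides of the counit-reduced pentagon into a form where the common factor $\Phi(c_r \otimes 1 \otimes 1)$ (or its third-slot analogue) can be cancelled. Once these reductions are carried out carefully, the cancellations producing (\ref{9}) and (\ref{10}) follow in a few lines each.
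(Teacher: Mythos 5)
Your proof is correct and follows the same route as the paper's (two-line) argument: apply the counit to the second, respectively third, tensor slot of the pentagon axiom, collapse the resulting factors via axioms (3) and (4), and cancel the common invertible factor to get (\ref{9}); then hit axiom (4) and the derived identity with further counits to get (\ref{10}). You have simply written out in full the bookkeeping that the paper leaves implicit.
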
 
\begin{proof}
By applying the counit on the second and third tensor factors in axioms (2) and using axiom (3), we obtain
(\ref{9}). Hence, applying the counit on the remaining (or third) tensor factor in axiom (4) and using axiom (3) we obtain
(\ref{10}).
\end{proof}

\begin{rem}{\label{remco} }We may define the normalized quantities $\hat c_l = \epsilon(c_l^{-1}) c_l$ and $\hat c_r = \epsilon(c_r^{-1}) c_r.$ Then  
$\big ({\cal  A}, \Delta, \epsilon, \Phi, \hat c_r, \hat c_l \big )$ is a quasi-bialgebra over $k$ with $\epsilon(\hat c_l) = \epsilon(\hat c_r) = 1;$ 
indeed the normalization of $c_l,\ c_r$ leaves the quasi-bialgebra axioms invariant.
Considering the latter statement we may assume, without loss of generality, that
$\epsilon(c_r) = \epsilon(c_l)=1$. In this case axiom (4) of Definition (\ref{definition1}) implies that we may recover $c_l$ and $c_r$ from $\Phi$ via
\begin{equation}
c_l^{-1} = \big ( \epsilon \otimes \epsilon \otimes \mbox{id}\big)\Phi, ~~~~c_r = \big ( \mbox{id} \otimes \epsilon \otimes \epsilon \big)\Phi. \label{constr0}
\end{equation}
Henceforth, we will always assume that $\epsilon(c_r) = \epsilon(c_l)=1$ and define a quasi-bialgebra structure
just by specifying $({\cal A}, \Delta, \epsilon, \Phi);$ in particular, we impose the axioms of Definition \ref{definition1}, where $c_l$ and $c_r$ are
shorthand notations defined in (\ref{constr0}). It is worth emphasizing that the common assumption for quasi-bialgebras is  $c_l=c_r=1$ \cite{Drintw}, so
 even though we have a counit-normalization here we still discuss  a more general situation (see also  later in the text the relevant Corollary \ref{trivial}).
\end{rem}

Following \cite{Drintw}, see also \cite{Kassel}, the notion of quasitriangularity for bialgebras extends to quasi-bialgebras and can be defined category-theoretically. The following definition captures the equivalent algebraic characterization.
\begin{defn}{\label{definition2}}  A quasi-bialgebra $\big ({\cal  A}, \Delta, \epsilon, \Phi \big )$ is called 
quasi-triangular (or braided) if an invertible element ${\cal R} \in {\cal A} \otimes {\cal A}$ (universal $R$-matrix) exists, such that
\begin{enumerate}
\item $\Delta^{(op)}(a) {\cal R} = {\cal R} \Delta(a),$ $\forall a \in {\cal A}.$
\item $(\mbox{id} \otimes \Delta){\cal R} = \Phi^{-1}_{231} {\cal R}_{13} \Phi_{213} {\cal R}_{12} \Phi^{-1}_{123}.$
\item $(\Delta \otimes \mbox{id}){\cal R} = \Phi_{312} {\cal R}_{13} \Phi_{132}^{-1} {\cal R}_{23} \Phi_{123}.$
\end{enumerate}
\end{defn}

By employing conditions (1)-(3) of Definition \ref{definition2} and condition (3) of Definition \ref{definition1} 
one deduces that $(\epsilon \otimes \mbox{id}) {\cal R}= c_r^{-1} c_l$ and $(\mbox{id} \otimes \epsilon) {\cal R}= c_l^{-1} c_r$.
Moreover, by means of conditions (1)-(3) of Definition \ref{definition2}, it follows that ${\cal R}$ satisfies a non-associative
version of the Yang-Baxter equation
\begin{equation}
{\cal R}_{12} \Phi_{312} {\cal R
}_{13} \Phi_{132}^{-1} {\cal R}_{23} \Phi_{123} =  
\Phi_{321}{\cal R}_{23}\Phi^{-1}_{231} {\cal R}_{13} \Phi_{213} {\cal R}_{12}. \label{modYBE}
\end{equation}

If ${\cal A}$ is a quasi-bialgebra in Definition \ref{definition2} then $\big ({\cal  A}, \Delta, \epsilon, \Phi, {\cal R} \big )$ 
is a quasi-triangular quasi-bialgebra.
The quasi-bialgebra is called  {\it triangular} if in addition to the conditions (1)-(3) of Definition \ref{definition2}, 
${\cal R}$ also satisfies ${\cal R}_{21} {\cal R}_{12}=1$.

In the special case where $\Phi = 1\otimes 1 \otimes 1$ one recovers a quasi-triangular bialgebra.
Indeed, the co-associativity is restored  $(\mbox{id} \otimes \Delta) \Delta(a)  =
(\Delta \otimes \mbox{id}) \Delta(a),$ and $c_r =c_l =1.$
Also,  $(\mbox{id} \otimes \Delta){\cal R} = {\cal R}_{13} {\cal R}_{12},$ $(\Delta \otimes \mbox{id}){\cal R} 
= {\cal R}_{13} {\cal R}_{23},$ and the universal ${\cal R}$-matrix satisfies the usual Yang-Baxter equation 
\begin{equation}
{\cal R}_{12}  {\cal R}_{13}  {\cal R}_{23}  = {\cal R}_{23} {\cal R}_{13} {\cal R}_{12}.  \label{YBE}
\end{equation}

In the remainder of this paper we consider two special cases of the general algebraic setting for quasi-triangular quasi-bialgebras as described above.
This specific setup, introduced in the following proposition, will be central in proving general key properties in this section and will be particularly relevant to the findings of sections 2 and 3.

\begin{pro}{\label{lemma1}} Let $\big ({\cal  A}, \Delta, \epsilon, \Phi, {\cal R} \big )$  
be a quasi-triangular quasi-bialgebra, then the following two statements hold:
\begin{enumerate}
\item Suppose $\Phi$  satisfies the condition (in the index notation)
$\Phi_{213}{\cal R}_{12} = {\cal R}_{12} \Phi_{123},$ then
\begin{eqnarray}
(\mbox{id} \otimes \Delta){\cal R} =  \Phi^{-1}_{231} {\cal R}_{13}{\cal R}_{12}, \quad   
(\Delta \otimes \mbox{id}){\cal R} = {\cal R}_{13}  {\cal R}_{23} \Phi_{123}, \label{2}
\end{eqnarray}
and the universal  ${\cal R}$ matrix satisfies the usual Yang-Baxter equation.\\ Also,
$(\epsilon \otimes \mbox{id}){\mathcal R} =  c_l,$ $~(\mbox{id} \otimes \epsilon){\cal R} = c_l^{-1} $ and $c_r =1_{\cal A}.$

\item  Suppose $\Phi$  satisfies  the condition $ \Phi_{132} {\cal R}_{23} ={\cal R}_{23}\Phi_{123},$
then
\begin{eqnarray}
 (\mbox{id} \otimes \Delta){\cal R} =  {\cal R}_{13}{\cal R}_{12}\Phi^{-1}_{123}, \quad
 (\Delta \otimes \mbox{id}){\cal R} = \Phi_{312} {\cal R}_{13}  {\cal R}_{23},  \label{22}
\end{eqnarray}
and the universal  ${\cal R}$ matrix satisfies the usual Yang-Baxter equation.\\ Also,
$(\epsilon \otimes \mbox{id}){\mathcal R} = c_r^{-1},$ $~(\mbox{id} \otimes \epsilon){\cal R} = 
c_r$ and $c_l=1_{\cal A}.$ 
\end{enumerate}
\end{pro}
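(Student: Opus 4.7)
The two statements are structurally symmetric, so I would prove Part (1) in detail and obtain Part (2) by the analogous argument with the hypothesis $\Phi_{132} {\cal R}_{23} = {\cal R}_{23} \Phi_{123}$ and with the roles of $c_r$ and $c_l$ (and the slot-permutations used) interchanged accordingly. For the two coproduct formulas in (\ref{2}): the first is obtained by substituting the hypothesis $\Phi_{213}{\cal R}_{12} = {\cal R}_{12}\Phi_{123}$ directly into axiom (2) of Definition~\ref{definition2}, so that the middle sandwich $\Phi_{213}{\cal R}_{12}\Phi^{-1}_{123}$ collapses to ${\cal R}_{12}$. For the second, I would first promote the hypothesis to a companion identity by applying the slot-swap $\tau_{23}$ of ${\cal A}^{\otimes 3}$ (an algebra automorphism under which $\Phi_{213}\mapsto\Phi_{312}$, ${\cal R}_{12}\mapsto{\cal R}_{13}$ and $\Phi_{123}\mapsto\Phi_{132}$) to both sides, obtaining $\Phi_{312}{\cal R}_{13} = {\cal R}_{13}\Phi_{132}$; inserted into axiom (3) of Definition~\ref{definition2}, this collapses $\Phi_{312}{\cal R}_{13}\Phi_{132}^{-1}$ to ${\cal R}_{13}$ and yields the desired formula.

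For the Yang--Baxter equation itself, a further cyclic relabelling $(1,2,3)\mapsto(2,3,1)$ of the hypothesis produces $\Phi_{321}{\cal R}_{23} = {\cal R}_{23}\Phi_{231}$. Substituting this and the two companion identities above into the modified YBE (\ref{modYBE}), every occurrence of $\Phi$ on both sides cancels except for a common right factor $\Phi_{123}$, and stripping it leaves the usual YBE ${\cal R}_{12}{\cal R}_{13}{\cal R}_{23} = {\cal R}_{23}{\cal R}_{13}{\cal R}_{12}$.

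The more delicate point is the unit constraint $c_r = 1_{\cal A}$; once it is established, the counit values $(\epsilon\otimes\mbox{id}){\cal R} = c_l$ and $(\mbox{id}\otimes\epsilon){\cal R} = c_l^{-1}$ follow immediately from the standard identities $(\epsilon\otimes\mbox{id}){\cal R} = c_r^{-1}c_l$ and $(\mbox{id}\otimes\epsilon){\cal R} = c_l^{-1}c_r$ recorded just after Definition~\ref{definition2}. To pin $c_r$ down, I would apply the algebra homomorphism $\mbox{id}\otimes\epsilon\otimes\epsilon : {\cal A}^{\otimes 3}\to{\cal A}$ to the hypothesis, so that each side factors as a product of one-tensor counit reductions. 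Lemma~\ref{lemma0}, axiom (4) of Definition~\ref{definition1}, and the standard counit formulas for ${\cal R}$ evaluate these reductions of $\Phi_{123}$, $\Phi_{213}$ and ${\cal R}_{12}$ as $c_r$, $c_r^{-1}c_l^{-1}c_rc_l$ and $c_l^{-1}c_r$ respectively; the resulting scalar equation in ${\cal A}$ simplifies to $c_r^{-1}c_l^{-1} = c_l^{-1}$, i.e.\ $c_r = 1_{\cal A}$. For Part (2), the analogous calculation using $\epsilon\otimes\mbox{id}\otimes\epsilon$ in place of $\mbox{id}\otimes\epsilon\otimes\epsilon$ yields $c_l = 1_{\cal A}$.

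The mechanical substitutions into axioms (2), (3) and into (\ref{modYBE}) are routine once the companion identities obtained by permuting slots are in place. The real obstacle is identifying, for each of the two parts, the precise counit combination that produces a nontrivial consequence: many one- and two-counit specializations of the hypothesis reduce either to tautologies or to weak consequences such as the commutation of $c_l$ and $c_r$, and only the combinations noted above are strong enough to force the relevant unit constraint to become trivial.
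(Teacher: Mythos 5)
Your proof is correct, and it reaches the same intermediate identities as the paper for the first two claims: the slot-permutation automorphisms of ${\cal A}^{\otimes 3}$ that you use to generate the companion relations $\Phi_{312}{\cal R}_{13}={\cal R}_{13}\Phi_{132}$ and $\Phi_{321}{\cal R}_{23}={\cal R}_{23}\Phi_{231}$ are precisely what the paper leaves implicit when it calls the derivation of (\ref{2}) and the reduction of (\ref{modYBE}) to (\ref{YBE}) ``straightforward''. Where you genuinely diverge is the unit constraint. The paper obtains $c_r=1_{\cal A}$ from the already-derived formula $(\Delta\otimes\mbox{id}){\cal R}={\cal R}_{13}{\cal R}_{23}\Phi_{123}$ by applying $\mbox{id}\otimes\epsilon\otimes\mbox{id}$, solving for $(\mbox{id}\otimes\epsilon\otimes\mbox{id}){\cal R}_{23}$, and then hitting the result with a second counit, so that $(\epsilon\otimes\mbox{id}){\cal R}=c_l$ comes first and $c_r=1_{\cal A}$ follows by back-substitution. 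You instead apply $\mbox{id}\otimes\epsilon\otimes\epsilon$ directly to the hypothesis $\Phi_{213}{\cal R}_{12}={\cal R}_{12}\Phi_{123}$ and evaluate the three reductions via Remark \ref{remco}, Lemma \ref{lemma0} and $(\mbox{id}\otimes\epsilon){\cal R}=c_l^{-1}c_r$; the resulting equation $(c_r^{-1}c_l^{-1}c_rc_l)(c_l^{-1}c_r)=(c_l^{-1}c_r)c_r$ does collapse to $c_r=1_{\cal A}$ with no commutativity assumption, after which the counit values of ${\cal R}$ drop out of the standard formulas. Your variant is marginally more economical and shows that the intertwining hypothesis on $\Phi$ forces the unit constraint independently of the derived coproduct formulas; the paper's version packages the counit action on ${\cal R}$ and the triviality of $c_r$ into a single chain. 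Both arguments are complete, and your sketch of Part (2) by the symmetric computation with $\epsilon\otimes\mbox{id}\otimes\epsilon$ also checks out.
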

\begin{proof} The proof of (\ref{2})  and   (\ref{22}) is straightforward by means of relations (2) and (3) 
of Definition \ref{definition2} and the conditions of cases (1), (2) of the Proposition.

By considering case (1) of Proposition \ref{lemma1} we conclude that the modified Yang-Baxter equation (\ref{modYBE}) becomes 
\begin{equation}
{\cal R}_{12}  {\cal R}_{13}  {\cal R}_{23} \Phi_{123} = 
{\cal R}_{23} {\cal R}_{13} {\cal R}_{12}  \Phi_{123}, \label{mod2}
\end{equation}
and because $\Phi$ is an invertible element, equation (\ref{mod2}) reduces to the usual Yang-Baxter equation (\ref{YBE}).
We end up to the same conclusion by considering case (2).\\
The remaining statements are shown as follows:
\begin{enumerate}
\item  We act on the second of the equations (\ref{2}) with $\mbox{id} \otimes \epsilon \otimes \mbox{id}$:
\begin{eqnarray}
&& (\mbox{id} \otimes \epsilon \otimes \mbox{id})\big ( ( \Delta \otimes \mbox{id} ){\cal R} \big)= (\mbox{id} \otimes \epsilon
\otimes \mbox{id}) \big ({\cal R}_{13} {\cal R}_{23} \Phi_{123} \big )\ \Rightarrow \nonumber \\
&&(c_r^{-1}  \otimes 1) {\cal R}_{13} (c_r  \otimes 1) =  {\cal R}_{13}\big ( (\mbox{id} \otimes 
\epsilon \otimes \mbox{id}) {\cal R}_{23}\big ) (c_r \otimes c_l^{-1}) \Rightarrow \nonumber\\
&&  {\cal R}_{13}^{-1} (c_r^{-1}  \otimes 1) 
{\cal R}_{13} (1 \otimes c_l) = (\mbox{id} \otimes \epsilon \otimes \mbox{id}) {\cal R}_{23}. \label{eps1}
\end{eqnarray}
We now act with $\epsilon$ on the first position of the tensor product (\ref{eps1}) and we obtain (recall $\epsilon(c_l) = \epsilon(c_r) = 1$)
$(\epsilon \otimes \mbox{id}) {\cal R} = c_l.$ By substituting the latter on (\ref{eps1}) we conclude that $c_r =1_{\cal A}.$

Similarly, we can act on the first of equs. $(\ref{2})$  
with $ \mbox{id}\otimes  \mbox{id} \otimes \epsilon,$ and arrive at 
\begin{equation}
( \mbox{id}\otimes  \mbox{id} \otimes \epsilon){\cal R}_{13} = (c_l^{-1} \otimes 1) {\cal R}_{12} 
(1 \otimes c_r) {\cal R}_{12}^{-1}, \nonumber
\end{equation}
which after acting with $\epsilon$ on the second position of the tensor product leads to  $(\mbox{id} \otimes \epsilon){\cal R}= c_l^{-1}$ and $c_r=1_{\cal A}$.

\item Likewise,  assuming that $\Phi_{132} {\cal R}_{23} ={\cal R}_{23}\Phi_{123}$ holds we show, following the logic of the proof above, that
$(\epsilon \otimes  \mbox{id}){\cal R} = c_r^{-1},$ $~( \mbox{id}\otimes \epsilon){\cal R} = c_r$ and $c_l= 1_{\cal A}.$
\hfill \qedhere 
\end{enumerate}

\end{proof}

\begin{rem}{\label{specialcase}} Let $u \in {\cal A}$ be a group-like element, i.e. $u$ is invertible and
$\Delta(u) = u \otimes u,$ and consequently $\epsilon(u) =1.$ Let also 
$\Phi = 1 \otimes 1 \otimes u^{-1} $ and ${\cal R} = u^{-1} \otimes u.$  With this choice of $\Phi$ and ${\cal R}$ a 
quasi-triangular quasi-bialgebra structure is defined.  
Indeed, the case (1) of Proposition \ref{lemma1} holds, i.e. conditions (\ref{2}) are satisfied and ${\cal R}$ trivially satisfies the 
Yang-Baxter equation. Also,  $c_r =1_{{\cal A}},$ $c_l = u$ and $(\epsilon \otimes \mbox{id}) {\cal R} = u,$ 
$(\mbox{id} \otimes \epsilon){\cal R} = u^{-1}.$  
\end{rem}

\subsection{Drinfeld twists} 

One of the main results on quasi-triangular (quasi-)bialgebras as shown 
by Drinfeld \cite{Drinfeld, Drintw} is the fact that  
the property of being quasi-triangular  (quasi-)bialgebra is
preserved by twisting (see also \cite{MaiSan}). As far as we can tell 
whenever the notion of Drinfeld twist is discussed in the literature
a trivial action of the co-unit on the twist  is almost always assumed.
In the following proposition
we are going to relax this condition and consider the most general scenario. 
In \cite{Drintw} (Drinfeld 1989, Section 1) it is explained how to use certain Drinfeld 
twists without this restricted counit action to twist quasi-bialgebras with nontrivial unit constraints to quasi-bialgebras to trivial constraints. For our purposes it is convenient to allow quasi-bialgebras with nontrivial unit constraints. We come back to this point later. We are then going to discuss a specific 
case that is associated to the study 
of quantum algebras emerging from set-theoretic solutions of the 
Yang-Baxter equation corresponding also to the conditions of Proposition \ref{lemma1}.

The following result is the natural extension of Drinfeld's result \cite{Drintw} on twists for quasi-bialgebras to the case of nontrivial unit constraints. Recall that we assume that, without loss of generality,  the unit constraints are expressed in terms of the associator by means of equation (\ref{constr0}).

\begin{pro}{\label{twist}} Let $\big ({\cal A},\Delta, \epsilon, \Phi, {\cal R} \big )$ be a 
quasi-triangular quasi-bialgebra and let ${\cal F} \in {\cal A} \otimes {\cal A}$ be an invertible element, such that
\begin{eqnarray}
&& \Delta_{\cal F}(a) = {\cal F} \Delta(a) {\cal F}^{-1}, ~~\forall a \in {\cal A} \\
&& \Phi_{\cal F}({\cal F} \otimes 1) \big  ( (\Delta \otimes \mbox{id}){\cal F} \big ) =  
 (1\otimes {\cal F})\big  ( (\mbox{id} \otimes \Delta){\cal F}\big ) \Phi \label{fi}\\
&& {\cal R}_{\cal F} = {\cal F}^{(op)} {\cal R} {\cal F}^{-1} ,\label{R}
\end{eqnarray}
where ${\cal F}^{(op)} := \sigma({\cal F}).$ 
Then  $\big ({\cal A},\Delta_{\cal F}, \epsilon, \Phi_{\cal F},  {\cal R}_{\cal F} \big )$ 
is also a quasi-triangular quasi-bialgebra.
\end{pro}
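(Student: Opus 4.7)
The plan is to verify the four axioms of Definition~\ref{definition1} together with the three axioms of Definition~\ref{definition2} for the twisted data, using the corresponding axioms for the untwisted data together with the defining relations of the twist. For axiom (1) of Definition~\ref{definition1}, observe that $\Delta_{\cal F}=\mbox{Ad}_{\cal F}\circ\Delta$ yields $\mbox{id}\otimes\Delta_{\cal F}=\mbox{Ad}_{1\otimes{\cal F}}\circ(\mbox{id}\otimes\Delta)$ and $\Delta_{\cal F}\otimes\mbox{id}=\mbox{Ad}_{{\cal F}\otimes 1}\circ(\Delta\otimes\mbox{id})$. Setting $X:=(1\otimes{\cal F})((\mbox{id}\otimes\Delta){\cal F})$ and $Y:=({\cal F}\otimes 1)((\Delta\otimes\mbox{id}){\cal F})$, a direct unpacking gives
\begin{equation*}
(\mbox{id}\otimes\Delta_{\cal F})\Delta_{\cal F}(a)=X\,(\mbox{id}\otimes\Delta)\Delta(a)\,X^{-1},\qquad (\Delta_{\cal F}\otimes\mbox{id})\Delta_{\cal F}(a)=Y\,(\Delta\otimes\mbox{id})\Delta(a)\,Y^{-1}.
\end{equation*}
Since (\ref{fi}) reads exactly $\Phi_{\cal F}\,Y=X\,\Phi$, conjugating the original quasi-coassociativity by $X$ immediately delivers quasi-coassociativity for $\Delta_{\cal F},\Phi_{\cal F}$.

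The pentagon axiom (2) for $\Phi_{\cal F}$ is the most calculational step. I would apply $\mbox{id}^{\otimes 2}\otimes\Delta_{\cal F}$ and $\Delta_{\cal F}\otimes\mbox{id}^{\otimes 2}$ to~(\ref{fi}), together with the analogue obtained via $\mbox{id}\otimes\Delta\otimes\mbox{id}$, and then collapse the resulting conjugation factors using the pentagon for $\Phi$. This follows Drinfeld's standard bookkeeping and is one of the main combinatorial obstacles of the proof. For the counit axioms (3), set ${\cal F}_l:=(\epsilon\otimes\mbox{id}){\cal F}$ and ${\cal F}_r:=(\mbox{id}\otimes\epsilon){\cal F}$; since $\epsilon\otimes\mbox{id}$ and $\mbox{id}\otimes\epsilon$ are algebra homomorphisms,
\begin{equation*}
(\epsilon\otimes\mbox{id})\Delta_{\cal F}(a)={\cal F}_l\,c_l^{-1}\,a\,c_l\,{\cal F}_l^{-1},\qquad (\mbox{id}\otimes\epsilon)\Delta_{\cal F}(a)={\cal F}_r\,c_r^{-1}\,a\,c_r\,{\cal F}_r^{-1},
\end{equation*}
which has the desired form once one defines $c_{l,{\cal F}}:=c_l\,{\cal F}_l^{-1}$ and $c_{r,{\cal F}}:=c_r\,{\cal F}_r^{-1}$, normalized as in Remark~\ref{remco}. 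Axiom (4) of Definition~\ref{definition1} for $\Phi_{\cal F}$ then follows by applying $\mbox{id}\otimes\epsilon\otimes\mbox{id}$ to both sides of (\ref{fi}) and using axiom (4) for $\Phi$. This is precisely where the relaxation of the usual hypothesis $(\epsilon\otimes\mbox{id}){\cal F}=(\mbox{id}\otimes\epsilon){\cal F}=1_{\cal A}$ manifests itself, as ${\cal F}_l,{\cal F}_r$ now contribute non-trivially to the unit constraints.

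Finally, I would verify the three axioms of Definition~\ref{definition2} for ${\cal R}_{\cal F}$. Axiom (1) is immediate:
\begin{equation*}
{\cal R}_{\cal F}\,\Delta_{\cal F}(a)={\cal F}^{(op)}\,{\cal R}\,\Delta(a)\,{\cal F}^{-1}={\cal F}^{(op)}\,\Delta^{(op)}(a)\,{\cal R}\,{\cal F}^{-1}=\Delta_{\cal F}^{(op)}(a)\,{\cal R}_{\cal F},
\end{equation*}
where I use that $\Delta_{\cal F}^{(op)}=\mbox{Ad}_{{\cal F}^{(op)}}\circ\Delta^{(op)}$. For the hexagonal axioms (2) and (3), one applies $\mbox{id}\otimes\Delta_{\cal F}$ and $\Delta_{\cal F}\otimes\mbox{id}$ to ${\cal R}_{\cal F}={\cal F}^{(op)}{\cal R}{\cal F}^{-1}$, substitutes the hexagonal axioms for ${\cal R}$, and rearranges all $\Phi$-factors into $\Phi_{\cal F}$-factors using~(\ref{fi}) and its $\sigma$-conjugate. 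I expect the hardest part to be the combinatorial bookkeeping of this last step and of the pentagon, where multiple $\Phi$- and ${\cal F}$-shaped tensor factors must be shuffled past one another simultaneously; the novelty relative to Drinfeld's original argument is the systematic tracking of the non-trivial ${\cal F}_l,{\cal F}_r$ contributions through the calculation.
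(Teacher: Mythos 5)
Your proposal is correct and follows essentially the same route as the paper: verify the quasi-bialgebra and quasi-triangularity axioms directly, noting that the coassociativity, pentagon and hexagon checks go through exactly as in Drinfeld's original argument (the paper writes out the hexagon for $(\mbox{id}\otimes\Delta_{\cal F}){\cal R}_{\cal F}$ explicitly, using the intertwining relations ${\cal F}_{21,3}{\cal R}_{12}={\cal R}_{12}{\cal F}_{12,3}$ and ${\cal F}_{1,32}{\cal R}_{23}={\cal R}_{23}{\cal F}_{1,23}$, where you only outline it), while the genuinely new content is tracking the counit of the twist through axioms (3)--(4). Your twisted unit constraints $c_l{\cal F}_l^{-1}$, $c_r{\cal F}_r^{-1}$ coincide with the paper's $c_l^{\cal F}=c_lv^{-1}$, $c_r^{\cal F}=c_rw^{-1}$.
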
 
\begin{proof}
Let as consider the general twist ${\cal F} = \sum_j f_j \otimes g_j.$ In terms of the invertible elements $v:= \sum_{j}\epsilon(f_j) g_{j},$  $~w:= \sum_j \epsilon(g_j) f_j ,$ we have
\begin{equation}
(\epsilon \otimes \mbox{id}){\cal F} = v,\quad  (\mbox{id} \otimes \epsilon){\cal F} = w. \label{extra}
\end{equation}

Axioms  (1)-(2) of Definition \ref{definition1} as well as axioms of Definition \ref{definition2} are checked 
as in the original proof by Drinfeld (see also \cite{Drintw, Kassel}). For these the action of the counit on 
the twist is never used. The check of these axioms is somehow tedious, but nevertheless straightforward. 
Let us for instance check the axioms (1)-(3) of Definition \ref{definition2}:

\begin{itemize}
\item Axiom (1) of Definition \ref{definition2}: we compute
\begin{eqnarray}
\Delta^{(op)}_{\mathcal F}(a){\mathcal R}_{\mathcal F} &=& 
{\cal F}^{(op)}\Delta^{(op)}(a) ( {\cal F}^{(op)})^{-1}  {\cal F}^{(op)}{\cal R} {\cal F}^{-1} \nonumber\\ 
&=& {\cal F}^{(op)} {\cal R} \Delta(a) {\cal F}^{-1} = {\mathcal R}_{\mathcal F}\Delta_{\mathcal F}(a).
\end{eqnarray}
\item Axioms  (2) and (3) of Definition \ref{definition2}: we compute
\begin{equation}
(\mbox{id} \otimes\Delta_{\cal F}){\cal R}_{\cal F} = \big ( (\mbox{id} \otimes {\cal F}) (\mbox{id} \otimes \Delta){\cal F}^{(op)}\big )
 \big ((\mbox{id} \otimes \Delta){\cal R} \big )\big ( ( \mbox{id} \otimes \Delta){\cal F}^{-1}  (\mbox{id} \otimes {\cal F}^{-1} )\big ). \nonumber
\end{equation}
 It is convenient to  introduce some useful notation: 
\begin{equation}
{\cal F}_{1,23} :=(\mbox{id} \otimes \Delta){\cal F}, ~~~~~{\cal F}_{12,3} :=(\Delta \otimes \mbox{id}){\cal F},
\end{equation}
 and by the quasi-bialgebra axioms 
${\cal F}_{21,3} {\cal R}_{12} = {\cal R}_{12} {\cal F}_{12,3},$ $\  {\cal F}_{1,32} {\cal R}_{23} = {\cal R}_{23} {\cal F}_{1,23};$  also recall $({\cal R}_{\cal F})_{12} = {\cal F}_{21} {\cal R}_{12} {\cal F}_{12}^{-1}.$
We then denote according to the index notation  $(\mbox{id} \otimes \Delta){\cal F}^{(op)} = {\cal F}_{23,1},$ and we re-express  condition (\ref{fi}) as 
${\cal F}_{12} {\cal F}_{12,3} = {\cal F}_{23} {\cal F}_{1,23} \Phi_{123},$ then
\begin{eqnarray}
(\mbox{id} \otimes\Delta_{\cal F}){\cal R}_{\cal F} &=&
{\cal F}_{23}{\cal F}_{23,1}\big ( (\mbox{id} \otimes \Delta){\cal R}\big ) {\cal F}_{1,23}^{-1}{\cal F}_{23}^{-1}\nonumber\\
 &=& (\Phi_{\cal F})^{-1}_{231}{\cal F}_{31}{\cal F}_{2,31}\Phi_{231}  \big (  \Phi_{231}^{-1}  {\cal R}_{13} 
\Phi_{213} {\cal R}_{12} \Phi^{-1}_{123}\big ){\cal F}_{1,23}^{-1}{\cal F}_{23}^{-1} \nonumber\\
&=&  (\Phi_{\cal F})^{-1}_{231}  ({\cal R}_{\cal F})_{13}{\cal F}_{13}{\cal F}_{2,13} \Phi_{213} {\cal R}_{12} {\cal F}_{123}^{-1} \nonumber\\
&=&  (\Phi_{\cal F})^{-1}_{231}({\cal R}_{\cal F})_{13}  (\Phi_{\cal F})^{-1}_{213}{\cal F}_{21}{\cal F}_{21,3} {\cal R}_{12} {\cal F}_{123}^{-1} \nonumber\\
&=&  (\Phi_{\cal F})^{-1}_{231}({\cal R}_{\cal F})_{13} (\Phi_{\cal F})^{-1}_{213} ({\cal R}_{\cal F})_{12}{\cal F}_{12}{\cal F}_{12,3}
\Phi^{-1}_{123} {\cal F}_{1,23}^{-1}{\cal F}_{23}^{-1}\nonumber\\
&=& (\Phi_{\cal F})^{-1}_{231} ({\cal R}_{\cal F})_{13} (\Phi_{\cal F})^{-1}_{213} ({\cal R}_{\cal F})_{12} (\Phi_{\cal F})^{-1}_{123}.\label{basic1}
\end{eqnarray}

Similarly, for $(\Delta_{\cal F} \otimes \mbox{id}){\cal R}_{\cal F}.$
\end{itemize}

We shall now examine axioms (3)-(4) of Definition \ref{definition1},  and  working out 
the action of the counit on the twisted ${\cal R}$-matrix.  
\begin{itemize}
\item Axiom (3) of Definition \ref{definition1}: we recall (\ref{extra}) and axiom (3) of Definition \ref{definition1}, then
$(\epsilon \otimes \mbox{id})\Delta_{\cal F}(a) = (\epsilon \otimes \mbox{id}) \big ({\cal F} \Delta(a) {\cal F}^{-1} \big )  
=( c^{\cal F}_l)^{-1}  a c^{\cal F}_l ,$
where $c^{\cal F}_l = c_l v^{-1}$. Similarly, $(\mbox{id} \otimes \epsilon)\Delta_{\cal F}(a)=  (c^{\cal F}_r)^{-1} a c^{\cal F}_r, $ 
where $c^{\cal F}_r = c_r w^{-1}$, $\forall a \in {\cal A}.$
\item Axiom (4) of Definition \ref{definition1}: we recall that $(\mbox{id} \otimes \epsilon \otimes \mbox{id}) (1\otimes {\cal F}) = 1 \otimes v,$
$(\mbox{id} \otimes \epsilon \otimes \mbox{id}) ({\cal F}\otimes 1) = w \otimes 1,$ and  $(\mbox{id} \otimes \epsilon \otimes \mbox{id}) 
(\mbox{id}\otimes\Delta) {\cal F}= (1\otimes c_l^{-1}) {\cal F} (1 \otimes c_l),$ $(\mbox{id} \otimes \epsilon \otimes \mbox{id})
 (\Delta \otimes \mbox{id}) {\cal F}= (c_r^{-1} \otimes 1) {\cal F} ( c_r \otimes 1),$
bearing also in mind (\ref{fi}) and the fact that $(\mbox{id} \otimes \epsilon \otimes \mbox{id})\Phi= c_r \otimes  c_l^{-1},$  we deduce that 
$(\mbox{id} \otimes \epsilon \otimes \mbox{id})\Phi_{\cal F} = c^{\cal F}_r \otimes (c^{\cal F}_l)^{-1},$ 
and this concludes our proof as all the axioms of the quasi-triangular quasi-bialgebra are satisfied. 

\end{itemize}

We also examine $(\mbox{id} \otimes \epsilon){\cal R}_{\cal F} $ and $(\epsilon \otimes \mbox{id}) {\cal R}_{\cal F},$ recalling (\ref{R}) and (\ref{extra}) 
we conclude that $(\mbox{id} \otimes \epsilon){\cal R}_{\cal F} = v \big ((\mbox{id} \otimes \epsilon){\cal R}\big ) w^{-1}$ and 
$(\epsilon \otimes \mbox{id}) {\cal R}_{\cal F} =w \big ((\epsilon \otimes \mbox{id}){\cal R} \big )v^{-1}.$ When one of the quasi-bialgebras is a bialgebra,
e.g. set $ \Phi_{\cal F}= 1\otimes 1 \otimes 1$  ($c^{\cal F}_l = c^{\cal F}_r=1$), then 
$(\epsilon \otimes \mbox{id}) {\cal R}_{\cal F}=(\mbox{id} \otimes \epsilon){\cal R}_{\cal F} = 1,$ and 
consequently  $(\mbox{id} \otimes \epsilon){\cal R} = v^{-1}w$ and $(\epsilon \otimes \mbox{id}){\cal R}= w^{-1}v.$ 
Recalling also that  $c^{\cal F}_l = c_l v^{-1}$ and  $c^{\cal F}_r = c_r w^{-1}$
we deduce that $c_l=v$  and $c_r=w.$

When also $\Phi =1\otimes 1 \otimes 1,$ i.e. we are dealing 
with two bialgebras then we also have $(\epsilon \otimes \mbox{id}) {\cal R}=(\mbox{id} \otimes \epsilon){\cal R}= 1$ and hence $v=w,$ and 
due to Axiom (3) of (quasi) bialgebras we immediately deduce that $u =1.$ 
\end{proof}

We note that in the case of a triangular quasi-bialgebra the extra condition ${\cal R}^{(op)}{\cal R} = 1_{{\cal A} \otimes {\cal A}}$  holds, 
and due to (\ref{R}) we deduce that
${\cal R}_{\cal F}^{(op)}{\cal R}_{\cal F} = 1_{{\cal A} \otimes {\cal A}},$ so triangularity is also preserved. 

We examine in the next Corollary a special case of the Proposition \ref{twist}, 
namely the case where the Drinfeld twist is a special pure tensor (see also \cite{Drintw}, Dinfeld 1989).
\begin{cor}{\label{trivial}}
Let $\big ({\cal A},\Delta, \epsilon,\Phi, {\cal R} \big )$ be a 
quasi-triangular quasi-bialgebra and consider the special twist ${\cal F}=c_r \otimes c_l,$ 
where $ c_r$ and $c_l $ are derived from $\Phi$ according to Remark \ref{remco}.
Then the twisted quasi-triangular quasi-bialgebra $\big ({\cal A},\Delta_{\cal F}, \epsilon, \Phi_{\cal F},{\cal R}_{\cal F} \big )$ \cite{Drintw}, has trivial unit constraints i.e.  $c^{\cal F}_l=  c^{\cal F}_r = 1_{\cal A}.$
\end{cor}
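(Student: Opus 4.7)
The plan is to deduce the claim as an essentially immediate application of Proposition \ref{twist} with the specific choice ${\cal F} = c_r \otimes c_l$. First I would verify that ${\cal F}$ qualifies as a twist in the sense required by Proposition \ref{twist}: since $c_r$ and $c_l$ are invertible elements of ${\cal A}$ by Definition \ref{definition1}, the element ${\cal F}$ is invertible in ${\cal A}\otimes{\cal A}$ with inverse $c_r^{-1}\otimes c_l^{-1}$, and $\Phi_{\cal F}$ is then defined by the relation (\ref{fi}). The proposition then directly yields that $\big({\cal A}, \Delta_{\cal F}, \epsilon, \Phi_{\cal F}, {\cal R}_{\cal F}\big)$ is a quasi-triangular quasi-bialgebra, so the only point left to check is the asserted triviality of its unit constraints.

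For that, I would recall from the proof of Proposition \ref{twist} the identities
\begin{equation}
c^{\cal F}_l = c_l\, v^{-1}, \qquad c^{\cal F}_r = c_r\, w^{-1},
\end{equation}
where $v := (\epsilon \otimes \mbox{id}){\cal F}$ and $w := (\mbox{id} \otimes \epsilon){\cal F}$. These express the twisted unit constraints purely in terms of the original constraints and the counit-evaluations of the twist, and they reduce the proof to a one-line computation of $v$ and $w$.

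The core calculation is then straightforward: evaluating on ${\cal F} = c_r \otimes c_l$ and using the normalization convention of Remark \ref{remco} that $\epsilon(c_r) = \epsilon(c_l) = 1$, one obtains $v = \epsilon(c_r)\, c_l = c_l$ and $w = c_r\, \epsilon(c_l) = c_r$. Substituting back gives $c^{\cal F}_l = c_l\, c_l^{-1} = 1_{\cal A}$ and $c^{\cal F}_r = c_r\, c_r^{-1} = 1_{\cal A}$, which is precisely the claim.

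There is no serious obstacle here; the argument is a direct application of Proposition \ref{twist} together with the normalization $\epsilon(c_r) = \epsilon(c_l) = 1$. The only point worth flagging is that without this normalization the identities $v = \epsilon(c_r)\, c_l$ and $w = c_r\, \epsilon(c_l)$ would leave scalar factors of $\epsilon(c_r)$ and $\epsilon(c_l)$ inside $c^{\cal F}_l$ and $c^{\cal F}_r$; these still cancel against $c_l^{-1}$ and $c_r^{-1}$ up to scalars, but the cleanest form of the statement relies on the normalization of Remark \ref{remco}.
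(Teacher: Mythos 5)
Your proof is correct, and it reaches the conclusion by a more direct route than the paper does. You work entirely with the identities $c^{\cal F}_l = c_l v^{-1}$ and $c^{\cal F}_r = c_r w^{-1}$ established in the proof of Proposition \ref{twist}, together with the one-line computation $v=(\epsilon\otimes\mbox{id})(c_r\otimes c_l)=c_l$ and $w=(\mbox{id}\otimes\epsilon)(c_r\otimes c_l)=c_r$ under the normalization $\epsilon(c_l)=\epsilon(c_r)=1$ of Remark \ref{remco}. The paper instead argues through the quasi-triangular structure: it computes ${\cal R}_{\cal F}=(c_l\otimes c_r){\cal R}(c_r^{-1}\otimes c_l^{-1})$, checks $(\epsilon\otimes\mbox{id}){\cal R}_{\cal F}=(\mbox{id}\otimes\epsilon){\cal R}_{\cal F}=1$ using $(\epsilon\otimes\mbox{id}){\cal R}=c_r^{-1}c_l$, and then invokes axiom (3) of Definition \ref{definition1} to conclude. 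Your approach buys two things: it does not use the universal $R$-matrix at all, so the triviality of the unit constraints is seen to hold for the twist $c_r\otimes c_l$ on any quasi-bialgebra, quasi-triangular or not; and it avoids a small logical gap in the paper's route, since $(\epsilon\otimes\mbox{id}){\cal R}_{\cal F}=(\mbox{id}\otimes\epsilon){\cal R}_{\cal F}=1$ by itself only yields $c^{\cal F}_l=c^{\cal F}_r$, and one still needs the counit formulas for $\Delta_{\cal F}$ (i.e.\ essentially your computation of $v$ and $w$) to pin both down to $1_{\cal A}$. Your closing remark about what fails without the normalization is also accurate and worth keeping.
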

\begin{proof}
For any quasi-triangular quasi-bialgebra 
$(\epsilon \otimes id){\cal R}= c_r^{-1} c_l$ and
$(id \otimes \epsilon){\cal R}= c_l^{-1} c_r.$
According to Remark \ref{remco} any quasi-triangular quasi-bialgebra can be suitably counit- normalized 
such that $\epsilon(c_r) = \epsilon(c_l)=1.$ 
We  now consider the special  twist
${\cal F} =c_r \otimes c_l$, which in the context of quasi-triangular quasi-bialgebras is  admissible.  
The ${\cal F}$-twisted quasi-triangular quasi-bialgebra has an $R$-matrix:
${\cal R}_{\cal F} = (c_l \otimes c_r) {\cal R} (c_r^{-1} \otimes c_l^{-1}),$
which satisfies: $(\epsilon \otimes \mbox{id}){\cal R}_{\cal F}= (\mbox{id} \otimes \epsilon){\cal R}_{\cal F} = 1$. 
The latter statement together with axiom (3) of Definition \ref{definition1}  lead to the ``trivialization" 
of the quasi-bialgebra, i.e. $c^{\cal F}_l = c^{\cal F}_r = 1_{\cal A}.$
\end{proof}

Note that twists can be in general composed, i.e. if the twist ${\cal F}$ sends quasi-bialgebra ${\cal A}$ to 
quasi-bialgebra ${\cal B}$,  with trivial unit constraints, and the twist ${\cal G}$ sends ${\cal B}$ to a third quasi-bialgebra ${\cal C}$, 
then the product ${\cal G} {\cal F}$ is a twist as well, sending ${\cal A}$ to ${\cal C}$.  This type of  factorization 
might make the technicalities of describing certain admissible twists a bit easier to follow, breaking it up into possibly more manageable building blocks.
In particular, if ${\cal A}$ is an arbitrary quasi-triangular quasi-bialgebra and ${\cal C}$ is a quasi-triangular bialgebra, 
then there always exist a quasi-triangular quasi-bialgebra ${\cal B}$ with trivial unit constraints, and twists ${\cal F}$ and ${\cal G}$ such that:
\begin{enumerate}
\item ${\cal F}$  is as described in Corollary \ref{trivial},
\item ${\cal G}$  satisfies $(\epsilon \otimes id) {\cal G}  = (id \otimes \epsilon) {\cal G} = 1$,
\item ${\cal B}$  is obtained from ${\cal A}$  by twisting with ${\cal F}$,
\item ${\cal C}$ is obtained from ${\cal B}$  by twisting with ${\cal G}$.
\end{enumerate}

$ $

We shall be focusing henceforth  on situations of twisting between a (quasi-)bialgebra  and a bialgebra in accordance to 
Propositions \ref{twist} for the special case of (quasi-)bialgebras of \ref{lemma1}, 
specifically $\Phi_{\cal F}= 1\otimes 1 \otimes 1$  and  $\Phi \neq 1\otimes1 \otimes 1.$ 
We are considering thus two specific cases that consist 
the appropriate framework to describe the quantum algebras \cite{DoiSmo1, DoiSmo2} emerging from
set-theoretic solutions of the Yang-Baxter equation expressed as suitable Drinfeld twists \cite{Doikoutw} 
as will be discussed in the subsequent section. 
We introduce now some useful notation and a worked out example  appropriate for our frame here when examining quantum 
algebras emerging form set-theoretic solutions of the YBE and their $q$-analogues (Sections 2 and 3), compatible also with the analysis in \cite{Doikoutw}.

\begin{rem}{\label{remdr}} According to  Proposition \ref{lemma1} we distinguish two cases:
Let $\big ({\cal A},\Delta, \epsilon,\Phi, {\cal R} \big )$ be a 
quasi-triangular quasi-bialgebra and $\big ({\cal A},\Delta_{\cal F}, \epsilon, {\cal R}_{\cal F} \big )$ a quasi-triangular bialgebra  
and let the conditions of 
Proposition \ref{lemma1} hold. We first recall the useful notation introduced in the proof of Proposition \ref{twist}: ${\cal F}_{1,23} :=
(\mbox{id} \otimes \Delta){\cal F}$ and 
${\cal F}_{12,3} :=(\Delta \otimes \mbox{id}){\cal F},$ and by the quasi-bialgebra axioms 
${\cal F}_{21,3} {\cal R}_{12} = {\cal R}_{12} {\cal F}_{12,3},$ $\  {\cal F}_{1,32} {\cal R}_{23} = {\cal R}_{23} {\cal F}_{1,23}. $
\begin{enumerate}
\item  If the associator satisfies $\Phi_{213} {\cal R}_{12} = {\cal R}_{12}\Phi_{123},$  we denote (index notation):
\begin{equation}
{\cal F}^*_{12,3} :=\big ((\Delta \otimes \mbox{id}){\cal F} \big )\Phi^{-1},
\end{equation}
then condition (\ref{fi}) can be re-expressed as ${\cal F}_{123} := {\cal F} _{23}{\cal F}_{1,23} = {\cal F}_{12} {\cal F}_{12,3}^*.$
Due to constraint (\ref{2}) we also  deduce ${\cal F}^*_{21,3} {\cal R}_{12} = {\cal R}_{12} {\cal F}^*_{12,3}.$ 
This  is compatible also with the first part of Proposition \ref{lemma1}.

\item If the associator satisfies $\Phi_{132} {\cal R}_{23} = {\cal R}_{23}\Phi_{123},$ we denote
\begin{equation}
{\cal F}^*_{1,23} =\big ( (\mbox{id} \otimes \Delta){\cal F}\big )\Phi,
\end{equation}
then condition (\ref{fi}) is re-expressed as ${\cal F}_{123} := {\cal F} _{23}{\cal F}^*_{1,23} = {\cal F}_{12} {\cal F}_{12,3}.$ 
In this case, due to constraint (\ref{2}), we also  deduce 
$ {\cal F}^*_{1,32} {\cal R}_{23} = {\cal R}_{23} {\cal F}^*_{1,23}.$ 
This is compatible with the second part of Proposition \ref{lemma1}.
\end{enumerate} 
\end{rem}

\begin{rem}{\label{remdr2}} Under the conditions of Proposition \ref{lemma1} the universal 
${\cal R}$-matrix satisfies the usual YBE, and hence the object 
${\mathrm T}_{1,23} := {\cal R}_{13} {\cal R}_{12}$ satisfies the RTT relation \cite{FadTakRes}:\\ 
${\cal R}({\mathrm T} \otimes  1) (1\otimes {\mathrm T}) =( 1 \otimes {\mathrm T}) ({\mathrm T} \otimes 1){\cal R}$.  
Also, due to the properties described in Remark \ref{remdr}:
\begin{equation}
({\mathrm T}_{\cal F})_{1,23} :=  ({\cal R}_{\cal F})_{13} ({\cal R}_{\cal F})_{12}= {\cal F}_{231} {\mathrm T}_{1,23} {\cal F}_{123}^{-1}.
\end{equation}
The above is checked as in \cite{Doikoutw} (the $N$-generalization also holds as described in \cite{Doikoutw}). 
\end{rem}

Before we focus on set-theoretic solutions of the YBE we will first work out a particular example
compatible with the special case of Remark \ref{specialcase}.
\begin{exa}{\label{cor1L}}
Let $\big ({\cal A}, \Delta_{{\cal A}}, \epsilon, {\cal R}_{\cal A}\big )$ be a quasi-triangular bialgebra.
Let also $u \in {\cal A}\otimes {\cal A}$ be a group-like element, i.e. $u$ is invertible and 
$\Delta_{\cal A}(u) = u\otimes u,$ and consequently $\epsilon(u) =1,$ $~\big [\Delta_{\cal A}(u),\ {\cal R}_{\cal A}\big ] = 0.$
Consider also the Drinfeld twist ${\cal F} = 1 \otimes u:$
${\cal R}= {\cal F^{(op)}}^{-1} {\cal R}_{\cal A} {\cal F}$ and $\Delta(a)= {\cal F}^{-1}
\Delta_{\cal A}(a) {\cal F},$ $a\in {\cal A}.$

Then $\big ({\cal A}, \Delta, \epsilon, \Phi,  {\cal R}\big )$ is a quasi-triangular quasi-bialgebra:
\begin{itemize}
\item $\Phi = 1\otimes 1 \otimes u^{-1}.$
\item $ c_r= 1,\ c_l = u.$
\item $(\Delta \otimes \mbox{id}) {\cal R} = {\cal R}_{13} {\cal R}_{23} \Phi_{123}$ and 
$(\mbox{id} \otimes \Delta)= \Phi^{-1}_{231}{\cal R}_{13} {\cal R}_{12}.$ 
\end{itemize}

We recall the twist:  ${\cal F} = 1 \otimes u,$
we can then readily check that 
$(\epsilon \otimes\mbox{id}){\cal  F} =u,$ 
and via $\epsilon(u)=1,$  we conclude $(\mbox{id} \otimes \epsilon)  {\cal F} = 1.$
Even though the condition $(\epsilon \otimes\mbox{id})  {\cal F} = 1,$ 
is now relaxed, the twist is still  admissible, indeed this is a rather trivial statement: ${\cal F}$ satisfies the following 
\begin{equation}
 {\cal F}_{123}:= {\cal F}_{12} {\cal F}_{12,3}^* = {\cal F}_{23} {\cal F}_{1,23} 
\end{equation}
where
${\cal F}_{1,23} := (\mbox{id} \otimes \Delta){\cal F} = 1 \otimes u \otimes u$ and ${\cal F}^*_{12,3} :=
\big ( (\Delta \otimes \mbox{id}){\cal F}\big )\Phi^{-1} = 1\otimes 1\otimes u^2 .$
The $N$-fold twist is then defined as
\begin{equation}
{\cal F}_{12...N}:= {\cal F}_{23..N} {\cal F}_{1,23..N} = {\cal F}_{12...N-1} {\cal F}_{12...N-1,N},
\end{equation}
more specifically ${\cal F}_{12..N} = 1 \otimes u \otimes u^2\otimes \ldots \otimes u^{N-1}.$
The twist is admissible and hence the YBE is also satisfied by ${\cal R}$  (see also \cite{Drin} and Proposition \ref{twist}).

Notice also that $\Phi_{213} {\cal R}_{12} ={\cal R}_{12} \Phi_{123},$ so Proposition \ref{lemma1}  
holds as well as Proposition \ref{twist} and Remark \ref{remdr}, 
then all the axioms of Definitions \ref{definition1} and \ref{definition2} are satisfied, indeed:
\begin{itemize}
\item $(\mbox{id} \otimes \Delta) \Delta(a) = (1\otimes 1\otimes u^{-1}) 
\big ((\Delta \otimes \mbox{id})\Delta(a) \big )(1\otimes 1 \otimes u),$  
$\forall a \in {\cal A}.$
\item Axiom 2 of Definition \ref{definition1} is trivially satisfied.
\item $(\epsilon \otimes \mbox{id}) \Delta(a) = u^{-1} a u$ and  $(\mbox{id} \otimes \epsilon) \Delta(a) =a$
\item $(\mbox{id}\otimes \epsilon \otimes \mbox{id})\Phi = 1 \otimes u^{-1}.$
\item $\Delta^{(op)}(a){\cal R} = {\cal R} \Delta(a),$ $\forall a \in {\cal A}.$
\item $(\mbox{id} \otimes \Delta){\cal R} = \Phi_{231}^{-1} {\cal R}_{13}{\cal R}_{12}\ $ and 
$\ (\Delta\otimes \mbox{id}){\cal R} = {\cal R}_{13}{\cal R}_{12} \Phi_{123}.$
\end{itemize}
And due to Proposition \ref{lemma1} the ${\cal R}$-matrix satisfies the Yang-Baxter equation. It is also straightforward to show, 
using that $\epsilon$ is an algebra homomorphism, that $(\epsilon \otimes \mbox{id}){\cal R} =u,\ (\mbox{id} \otimes \epsilon){\cal R} = u^{-1},$ 
as expected due to Proposition \ref{lemma1}. In this example it is clear that in the quasi-bialgebra setting Drinfeld twists do not require a counit constraint.
\end{exa}

\section{Quasi-bialgebras from Yangians}

\noindent In this section we focus on the quantum algebras emerging from involutive, non-degenerate,
set-theoretic solutions of the YBE.  From now on we work over the field $k=\mathbb{C}.$
It was shown in \cite{Doikoutw} after identifying a suitable  admissible twist that Baxterized set-theoretic solutions are always coming from the $\mathfrak{gl}_n$
Yangian $R$-matrix via the mentioned twist. We will show here that the respective  twisting of the $\mathfrak{gl}_n$ Yangian leads to a
quasi-triangular quasi-bialgebra in accordance to the findings of section 1.

\subsection{Set-theoretic solutions of the YBE}

\noindent We present in this section basic background information regarding set-theoretic solutions of the 
Yang-Baxter equation as well as a brief review on the recent findings of \cite{DoiSmo1}
on the links between set-theoretic solutions of the Yang-Baxter equation and quantum algebras.

\noindent Let $X=\{{\mathrm x}_{1},{\mathrm x}_{2}, \ldots, {\mathrm x}_n\}$ be a finite set and ${\check r}: 
X\times X\rightarrow X\times X,$ such that
 \[{\check r}(x,y)= \big (\sigma _{x}(y), \tau _{y}(x)\big ).\] 
We say that $\check r$ is non-degenerate if $\sigma _{x}$ 
and $\tau _{y}$ are bijective functions. Also, the solution $(X, \check r)$  is involutive: $\check r ( \sigma _{x}(y), \tau _{y}(x)) = 
(x, y)$, ($ \check r^2 (x, y)) = (x, y)$). We focus on non-degenerate,  involutive solutions of the set-theoretic braid equation:
\[({\check r}\times id_{X})(id_{X}\times {\check r})({\check r}\times id_{X})=(id_{X}\times {\check r})({\check r}
\times id_{X})(id_{X}\times {\check r}).\]

Let $V$ be the space of dimension equal to the cardinality of $X$, and with a slight abuse of notation, 
let  $\check r$ also  denote the matrix associated to the linearisation of ${\check r}$ on 
$V={\mathbb C }X$ (see \cite{LAA} for more details), i.e.
$\check r$  is the $n^2 \times n^2$ matrix: 
\begin{equation}
{\check r}=\sum_{x, y\in X} e_{x, \sigma_x(y)}\otimes e_{y, \tau_y(x)}. \label{brace1}
\end{equation}
where $e_{x, y}$ is the $n \times n$  matrix: $(e_{x,y})_{z,w} =\delta_{x,z}\delta_{y,w} $.
 The matrix $\check r:V\otimes V\rightarrow V\otimes V$ satisfies as expected the (constant) Braid equation:
\[({\check r}\otimes I_{V})(I_{V}\otimes {\check r})({\check r}\otimes I_{V})=(I_{V}\otimes {\check r})({\check r}\otimes 
I_{V})(I_{V}\otimes {\check r}),\]
where $I_V$ the identity matrix.  Note  also that $\check r$ is involutive, i.e.  ${\check r}^{2}=I_{V \otimes V}.$ 

We define also, $r={\mathcal P}{\check r}$, where ${\mathcal P} = \sum_{x, y \in X} e_{x,y} \otimes e_{y,x}$ 
is the permutation operator; consequently
${r}=\sum_{x, y\in X} e_{y,\sigma_x(y)}\otimes e_{x, \tau_y(x)}.$
The Yangian \cite{Yang} is a special case: $\check r= \sum_{x, y \in X} e_{x,y} \otimes e_{y,x}.$

$ $

We recall now the Yang-Baxter equation in the braid form in the presence of spectral parameters 
$\lambda_1,\ \lambda_2$ ($\delta = \lambda_1 - \lambda_2$) :
\begin{equation}
\check R_{12}(\delta)\ \check R_{23}(\lambda_1)\ \check R_{12}(\lambda_2) = \check R_{23}(\lambda_2)\
 \check R_{12}(\lambda_1)\ \check R_{23}(\delta) . \label{YBE1}
\end{equation}
where $\check R: V \otimes \to V \otimes V$  and let in general $\check R = \sum_{j} a_j \otimes b_j,$ then in the index notation 
$\check R_{12} =\sum_j a_j \otimes b_j \otimes I_V,$ 
 $\check R_{23} =\sum_j  I_V \otimes a_j \otimes b_j $ and  $\check R_{13} =\sum_j a_j \otimes  I_V \otimes b_j.$

We focus henceforth on involutive, non-degenerate set-theoretic  solutions of the YBE, given by (\ref{brace1}). 
The set-theoretic solution $\check r$ (\ref{brace1})
is a representation of the $A$-type Hecke algebra for $q=1$ (see also \cite{DoiSmo1}),
as $\check r$ satisfies the braid relations and also 
$\check r^2 = I_{X \otimes X}.$ The set-theoretic $\check r$ provides a representation 
of the $A$-type Hecke algebra,  hence
Baxterized solutions of the Yang-Baxter equation can be derived \cite{DoiSmo1}:
\begin{equation}
\check R(\lambda) = \lambda \check r + I_X\otimes I_X, \label{braid1}
\end{equation}
where $I_{X}$ is the identity matrix of dimension equal to the cardinality of the set $X$.
Let also $R(\lambda) = {\cal P} \check R(\lambda)$, (recall the permutation operator ${\cal P} = \sum_{x, y} e_{x,y}\otimes e_{y,x}$),  
then the following  properties for $R$-matrices coming from set-theoretic solutions were shown in \cite{DoiSmo1}:\\
\begin{enumerate}
\item Unitarity: $R_{12}(\lambda)\  R_{21}(-\lambda) = (-\lambda^2 +1)  I_{X\otimes X}.$
\item  Crossing Unitarity: $ R_{12}^{t_1}(\lambda)\ R_{12}^{t_2}(-\lambda -n) = 
\lambda(-\lambda -n) I_{X\otimes X}.$ 
\item $R_{12}^{t_1 t_2}(\lambda) = R_{21}(\lambda),$
where $^{t_{1,2}}$ denotes transposition on the first, second space respectively.
\end{enumerate}

We give a brief account on the quantum algebra emerging from Baxterized set-theoretic solutions of the Yang-Baxter equation.
Our approach on deriving the quantum group associated to set-theoretic solutions \cite{DoiSmo1, DoiSmo2}
is based on the FRT construction \cite{FadTakRes}, which is in a sense dual to the Hopf algebraic description \cite{Drinfeld}.
Given a solution of the Yang-Baxter equation, the quantum algebra is defined via the fundamental relation \cite{FadTakRes}(we have multiplied the familiar RTT relation with the permutation operator):
\begin{equation}
\check R_{12}(\lambda_1 -\lambda_2)\ L_1(\lambda_1)\ L_2(\lambda_2) = L_1(\lambda_2)\ L_2(\lambda_1)\ 
\check R_{12}(\lambda_1 -\lambda_2),  \label{RTT}
\end{equation}
where $\check R(\lambda) \in \mbox{End}({\mathbb C}^n) \otimes  \mbox{End}({\mathbb C}^n)$, $\ L(\lambda) \in 
\mbox{End}({\mathbb C}^n ) \otimes {\cal A}$ and ${\cal A}$ is the quantum algebra defined by (\ref{RTT}). 
We focus on solutions given by (\ref{braid1}), (\ref{brace1}). 
The defining relations of the corresponding quantum algebra were derived in \cite{DoiSmo1}:

\noindent The quantum algebra associated to the set-theoretic $R$-matrix  (\ref{braid1}), (\ref{brace1}) 
is defined by generators $L^{(m)}_{zw},\ z, w \in X$, and defining relations
\begin{eqnarray}
L_{z,w}^{(n)} L_{\hat z, \hat w}^{(m)} - L_{z,w}^{(m)} L_{\hat z, \hat w}^{(n)} &=& 
L^{(m)}_{z, \sigma_w(\hat w)} L^{(n+1)}_{\hat z,\tau_{\hat w}( w)}- L^{(m+1)}_{z, \sigma_w(\hat w)} 
L^{(n)}_{\hat z, \tau_{\hat w}( w)}\nonumber\\ &-& L^{(n+1)}_{ \sigma_z(\hat z),w} 
L^{(m)}_{\tau_{\hat z}( z), \hat w }+ L^{(n)}_{ \sigma_z(\hat z, )w}  L^{(m+1)}_{\tau_{\hat z}( z), \hat w}. \label{fund2}
\end{eqnarray}

The proof is based on the fundamental relation (\ref{RTT}) and the form of the Baxterized set-theoretic 
$R$-matrix
(for the detailed proof see \cite{DoiSmo1}). Recall also that
in the index notation we define $\check R_{12} = \check R \otimes 1_{\cal A}$:
\begin{eqnarray}
&&  L_1(\lambda) = \sum_{z, w \in X} e_{z,w} \otimes I_X \otimes L_{z,w}(\lambda),\ \quad  L_2(\lambda)= \sum_{z, w \in X}
I_X  \otimes  e_{z,w}  \otimes L_{z,w}(\lambda)  \label{def}
\end{eqnarray} where $L_{z,w}(\lambda) = \sum_{m=0}^{\infty}\lambda^{-m}L_{z,w}^{(m)}$ and $L_{z,w}^{(m)}$
are the generators of the affine algebra ${\cal A}$ and $\check R$ is given in (\ref{braid1}), (\ref{brace1}). 
Note that the element
$ {\mathrm T}_{1,23}(\lambda)= L_{13}(\lambda) L_{12}(\lambda),$  also satisfies (\ref{RTT})  \cite{FadTakRes, Drinfeld}, i.e it is 
 a tensor representation of the quantum algebra.

\subsection{The Yangian ${\cal Y}(\mathfrak{gl}_n)$} 

\noindent We now present a brief review on the $\mathfrak{gl}_n$ Yangian useful for our purposes here.
We first briefly recall the definition of the $\mathfrak{gl}_n$ Yangian, 
which is most relevant in our present investigation (for a review on Yangians see e.g. \cite{Drinfeld, ChaPre, yangians}). 
We also review the Yangian as a Hopf algebra and we then comment on the action of the antipode after a suitable twist of the algebra.

\begin{defn}{\label{Yangian}}
The $\mathfrak{gl}_n$ Yangian ${\cal Y}(\mathfrak{gl}_n)$,
is a non-abelian algebra with generators ${\mathrm Q}_{ab}^{(p)}$, $p\in \big \{ 1,2,\ldots\big\}$, $ a, b \in \big  \{ 1,\ 2, \dots, n\big  \}$
and defining relations given below 
\ba
&&\Big [ {\mathrm
Q}_{ab}^{(1)},\ {\mathrm Q}_{cd}^{(1)} \Big ] =\delta_{cb}{\mathrm Q}^{(1)}_{ad} - \delta_{ad}{\mathrm Q}^{(1)}_{cb}
\cr && \Big [ {\mathrm Q}_{ab}^{(1)},\ {\mathrm Q}_{cd}^{(2)} \Big ] =\delta_{cb}
{\mathrm Q}^{(2)}_{ad} - \delta_{ad}{\mathrm Q}^{(2)}_{cb}
\cr
  && \Big [ {\mathrm Q}_{ab}^{(2)},\ {\mathrm Q}_{cd}^{(2)} \Big ]
=\delta_{cb}{\mathrm Q}^{(3)}_{ad} -\delta_{ad}{\mathrm Q}^{(3)}_{cb}
+{1\over 4}{\mathrm Q}_{ad}^{(1)}(\sum_{e} {\mathrm
Q}_{ce}^{(1)}{\mathrm Q}_{eb}^{(1)})- {1\over 4}(\sum_{e}{\mathrm Q}_{ae}^{(1)}{\mathrm Q}_{ed}^{(1)})
{\mathrm Q}_{cb}^{(1)} \label{A}
\ea
also relations
\ba
&& \Big [ {\mathrm Q}_{ab}^{(1)},\ \Big [ {\mathrm Q}_{cd}^{(2)},\ {\mathrm Q}_{ef}^{(2)} \Big ] \Big ]- \Big
[ {\mathrm Q}_{ab}^{(2)},\
\Big [{\mathrm Q}_{cd}^{(1)},\ {\mathrm Q}_{ef}^{(2)} \Big ] \Big ] =\nonumber\\ &&   
{1\over 4}\sum_{p,q} \Big ( \Big [ {\mathrm Q}_{ab}^{(1)},\ \Big [ {\mathrm Q}_{cp}^{(1)}{\mathrm Q}_{pd}^{(1)},\
{\mathrm Q}_{eq}^{(1)}{\mathrm
Q}_{qf}^{(1)} \Big ] \Big ] -\Big [ {\mathrm Q}_{ap}^{(1)}{\mathrm Q}_{pb}^{(1)},\ \Big [ {\mathrm Q}_{cd}^{(1)},\ 
{\mathrm Q}_{eq}^{(1)}{\mathrm Q}_{qf}^{(1)} \Big ] \Big ]  \Big ), \\
& &\ldots \mbox{(higher orders)} \nonumber
\ea
\end{defn}

It will be useful for what follows to introduce at this point the definition of a quasi-Hopf algebra \cite{Drinfeld}.

\begin{defn}{\label{definition1b}} A quasi-Hopf algebra ${\cal A}$ is a quasi-bialgebra $\big ( {\cal A} ,\Delta  , \epsilon, \Phi \big )$ 
for which there exist  $\alpha ,\beta \in {\mathcal {A}}$ and a bijective algebra anti-homomorphism $S: {\cal A} \to {\cal A}$ (the antipode) such that
\begin{enumerate}
\item $\sum_j S(f_j) \alpha h_j = \epsilon(w) \alpha\ $ and $\ \sum_j f_j\beta S(h_j) = \epsilon(w) \beta$\\ where 
$\Delta(w) = \sum_j f_j \otimes h_j,$ $\forall w \in {\cal A}.$

\item $\sum_j x_j\beta S(y_j) \alpha z_j = 1_{\cal A}\ $ and $\ \sum_j S(\hat x_j)\alpha \hat y_j \beta S(\hat z_j) = 1_{\cal A}$\\
where $\Phi = \sum_j x_j \otimes y_j \otimes z_j$ and $\Phi^{-1} = \sum_j \hat x_j \otimes \hat y_j \otimes \hat z_j.$
\end{enumerate}
\end{defn}
Quasi-Hopf algebras generalize Hopf algebras in the same way that quasi-bialgebras generalize bialgebras.

The Yangian $\big ( {\cal Y}(\mathfrak{gl}_n) , \Delta_Y, \epsilon, S_Y, {\cal R}_{Y}\big )$  is 
a quasi-triangular Hopf algebra over ${\mathbb C}$ equipped with:
\begin{itemize}
\item A coproduct
$\Delta_Y: {\cal Y}(\mathfrak{gl}_n) \rightarrow {\cal Y}(\mathfrak{gl}_n) \otimes {\cal Y}(\mathfrak{gl}_n)$ such that
\ba
\Delta_Y({\mathrm Q}_{ab}^{(1)}) &=& {\mathrm Q}_{ab}^{(1)} \otimes 1 + 1 \otimes
{\mathrm Q}_{ab}^{(1)} \cr
\Delta_Y({\mathrm Q}_{ab}^{(2)}) &=& {\mathrm Q}_{ab}^{(2)}
\otimes 1 + 1 \otimes {\mathrm Q}_{ab}^{(2)} + {1 \over 2}
\sum _{d=1}^n({\mathrm Q}_{ad}^{(1)}\otimes
{\mathrm Q}_{db}^{(1)}-
{\mathrm Q}_{db}^{(1)}\otimes {\mathrm Q}_{ad}^{(1)}). \label{cop}
\ea
The $l$-fold co-product  ($l$ is an integer greater than 2)  $\Delta_Y^{(l)}: {\cal
Y}(\mathfrak{gl}_n) \ \to \ {\cal Y}^{\otimes (l)}(\mathfrak{gl}_n) $ is defined as
\be
\Delta_Y^{(l)} = (\mbox{id} \otimes \Delta_Y^{(l-1)})\Delta_Y =(\Delta_Y^{(l-1)} \otimes \mbox{id}) 
\Delta_Y. \label{cop22}
\ee

 \item A co-unit $\epsilon: {\cal Y}(\mathfrak{gl}_n) \to {\mathbb C}$,  such that
\begin{equation}
\epsilon({\mathrm Q}^{(1)}_{ab}) = \epsilon({\mathrm Q}^{(2)}_{ab}) =0,
\end{equation}
\item An antipode $S_Y: {\cal Y}(\mathfrak{gl}_n) \to {\cal Y}(\mathfrak{gl}_n)$ such that
\begin{equation}
S_Y({\mathrm Q}^{(1)}_{ab}) = - {\mathrm Q}_{ab}^{(1)}, \qquad S_Y({\mathrm Q}^{(2)}_{ab}) =
 - {\mathrm Q}^{(2)}_{ab} + {1 \over 2} {\mathrm Q}^{(1)}_{ab}
\end{equation}

\item Also, there exists an invertible element ${\cal R}_Y\in {\cal Y}(\mathfrak{gl}_n) \otimes {\cal Y}(\mathfrak{gl}_n)$ 
(the universal ${\cal R}$-matrix), such that
\begin{enumerate}
\item ${\cal R}_Y \Delta_Y({\mathrm Q}_{ab}^{(p)}) = \Delta_Y^{(op)}({\mathrm Q}_{ab}^{(p)})  {\cal R}_Y.$
\item $(\Delta_Y \otimes \mbox{id}){\cal R}_Y =({\cal R}_Y)_{13} ({\cal R}_Y)_{23} $ and $(\mbox{id} 
\otimes \Delta_Y){\cal R}_Y= ({\cal R}_Y)_{13} ({\cal R}_Y)_{12}$.
\end{enumerate}
\end{itemize}

We recall now Example \ref{cor1L} and we briefly discuss the notion of the antipode in the following:
\begin{rem}{\label{remanti}} We specialize Example \ref{cor1L} to the case of the Yangian 
${\cal Y}(\mathfrak{gl}_n)$, i.e. 
$\big( {\cal Y}(\mathfrak{gl}_n), \Delta, \epsilon, \Phi, {\cal R} \big ), $ where $\Phi = 1 \otimes 1 \otimes u^{-1}$ for some group-like element $u,$
is a quasi-triangular quasi-bialgebra.  We now want to test whether 
$\big ( {\cal Y}(\mathfrak{gl}_n),  \Delta, \epsilon,\Phi,  {\cal R}\big)$ is a quasi-Hopf algebra,  i.e.  if there exist $S$, $\alpha$, $\beta$ such that axioms (1) and (2) in Definitio \ref{definition1b} hold.

Suppose that the group-like element $u$ exists,  then $\Delta(u) = u \otimes u$ and:
\begin{enumerate}
\item $S(u)\alpha u =\alpha$ and $u\beta S(u) =\beta,$ which lead to $S(u) = \alpha u^{-1} \alpha^{-1} = \beta^{-1} u^{-1}\beta.$
\item $\beta \alpha u^{-1} =1_{Y}$ and $\alpha \beta S(u) = 1_Y,$ which lead to $u = \beta \alpha$ and $S(u) = \beta^{-1} \alpha^{-1}.$
\end{enumerate} 
All the above equations are self-consistent. Similarly, we can show that  $S(u^{-1}) = \alpha \beta$

We now check the axioms of Definition \ref{definition1b} for the primitive elements ${\mathrm Q}_{ab}^{(1)},$  with a coproduct after twisting given as
$\Delta({\mathrm Q}_{ab}^{(1)}) = {\mathrm Q}_{ab}^{(1)} \otimes 1 + 1 \otimes u^{-1}{\mathrm Q}_{ab}^{(1)}u $:
\begin{enumerate}
\item $S({\mathrm Q}_{ab}^{(1)}) \alpha + \alpha u^{-1}{\mathrm Q}_{ab}^{(1)} u =0$ and 
${\mathrm Q}_{ab}^{(1)}\beta + \beta S(u^{-1}{\mathrm Q}_{ab}^{(1)} u) = 0,$ which lead to (by requiring also that
 $S$ is an algebra anti-homomorphism): $S({\mathrm Q}_{ab}^{(1)}) = -\alpha u^{-1}{\mathrm Q}_{ab}^{(1)} u \alpha^{-1}$ and 
$S({\mathrm Q}_{ab}^{(1)}) = -\alpha {\mathrm Q}_{ab}^{(1)} \alpha^{-1}.$ 
The two latter expressions lead to $\big [{\mathrm Q}_{ab}^{(1)}, u\big ]=0,$ which is not true in general. 
\end{enumerate}

\noindent The axioms for the antipode restrict $u$ to be in the center of the algebra. This is very limiting, 
and in general is not true.  Indeed, a simple class of such non-central group-like elements can be defined as $u = e^{Q_{ab}^{(1)}},$ where recall $Q_{ab}^{(1)}$ are primitive elements of the algebra.
Also,  a specific example of a represented non-central $u$ is used in the next subsection 
for a special class of set-theoretic solutions.

To conclude, we are not able to define $S({\mathrm Q}_{ab}^{(1)})$ in a consistent way for a generic $u$  
by strictly following 
the axioms of Definition \ref{definition1b}.
A generalization on the axiomatic formulation for the antipode in quasi-Hopf algebras with nontrivial unit constraints may be in order, however this issue will 
be thoroughly studied elsewhere.
\end{rem}

It is useful for the purposes of the present investigation to introduce the evaluation representation
$\pi_{\lambda}: {\cal Y}(\mathfrak{gl}_n)\to \mbox{End}({\mathbb C}^n)$, $\lambda \in {\mathbb C},$ such that
\begin{equation}
\pi_{\lambda}({\mathrm Q}_{ab}^{(1)}) = e_{a,b},  \quad  \pi_{\lambda}({\mathrm Q}_{ab}^{(2)}) =  f_{a,b}
\end{equation}
where we define $f_{a,b} := \lambda e_{a,b}$.
Let $R_Y: V \otimes V\ to V \otimes V$ be the $R$-matrix associated to Yangian with explicit form
$R_Y(\lambda) = I^{\otimes 2} + {1\over \lambda} {\cal P},$ where
${\cal P}$ is the permutation operator and $I$ is the $n\times n$ identity matrix.

We also  introduce the following convenient notation:
\begin{equation}
\big (\pi_{\lambda_1} \otimes \pi_{\lambda_2}\big)\Delta_Y({\mathrm Q}_{ab}^{(1)}) = \Delta_Y(e_{a,b}), \qquad 
\big (\pi_{\lambda_1} \otimes \pi_{\lambda_2}\big)\Delta_Y({\mathrm Q}_{ab}^{(2)}) = \Delta_Y(f_{a,b}; \lambda_1, \lambda_2), \label{eval}
\end{equation}
also, $\big (\pi_{\lambda_1} \otimes \pi_{\lambda_2}\big)\Delta_Y^{(op)}({\mathrm Q}_{ab}^{(2)}) = \Delta_Y^{(op)}(f_{a,b}; \lambda_1, \lambda_2)$ and 
$\Delta_Y^{(op)}(f_{a,b}; \lambda_1, \lambda_2) = {\cal P} \Delta_Y(f_{a,b}: \lambda_2, \lambda_1){\cal P},$  
then we explicitly express the coproducts as
\begin{eqnarray}
&& \Delta_Y(e_{i,j}) = \Delta^{(op)}(e_{i,j})=e_{i,j} \otimes I + I \otimes e_{i,j}, \cr
&& \Delta_Y(f_{i,j}; \lambda_1, \lambda_2) = \lambda_1 e_{i,j} \otimes I + \lambda_2 I \otimes e_{i,j} + {1\over 2}\big(e_{i,k} \otimes e_{k,j} - 
e_{k,j} \otimes e_{i,j} \big ).\label{coY}
\end{eqnarray}
The Yangian $R$-matrix satisfies the following intertwining relations:
\begin{eqnarray}
&& R_Y(\lambda_1 -\lambda_2) \Delta_Y(e_{i,j})  = \Delta_Y(e_{i,j}) R_Y(\lambda_1 - \lambda_2) \cr
&& R_Y(\lambda_1 -\lambda_2) \Delta_Y(f_{i,j}; \lambda_1, \lambda_2)  =  \Delta_Y^{(op)}(f_{i,j}; \lambda_1, \lambda_2)
 R_Y(\lambda_1 - \lambda_2). \label{basicY}
\end{eqnarray}
Specifically, $\check R_Y ={\cal P} R_{Y}$ is $\mathfrak{gl}_n$ invariant, i.e.$\big [\check R_Y(\lambda),\ \Delta_{Y}(e_{x,y}) \big ] =0.$

And with this we conclude our short discussion on the $\mathfrak{gl}_n$ 
Yangian, which will be useful for the findings of the next subsection.

\subsection{Set-theoretic solutions of the YBE and quasi-bialgebras}

\noindent After the brief review on the $\mathfrak{gl}_n$ Yangian we may 
now move on to our main aim which is the study 
of set-theoretic solutions of the Yang-Baxter equation
associated to quasi-bialgebras.
We first review some fundamental results on the admissible Drinfeld twist for involutive
 set-theoretic solution of the YBE derived in \cite{Doikoutw} and we use these admissible 
twists to produce quasi-bialgebras associated to the Yangian.

$ $

From Proposition 3.3 in \cite{DoiSmo2} we can extract explicit forms for the twist 
$F \in\mbox{End}({\mathbb C}^n) \otimes\mbox{End}({\mathbb C}^n)  $ and state 
the following Proposition which is Proposition 3.10 in \cite{Doikoutw}.

\begin{pro}{\label{twistlocal} (\cite{DoiSmo2, Doikoutw})} 
Let $\check r = \sum_{x,y \in X} e_{x, \sigma_x(y)} \otimes e_{y, \tau_y(x) }$ be
 the set-theoretic solution of the braid YBE,  ${\cal P}$ is the permutation 
operator and $\hat V_k,\ V_k$ are their respective eigenvectors.  
Let $F^{-1} = \sum_{k=1}^{n^2} \hat V_k\  V_k^T$ 
be the similarity transformation (twist), such that $\check r = F^{-1} {\cal P} F.$
Then the twist can be explicitly expressed as 
$F = \sum_{x\in X} e_{x,x} \otimes {\mathbb V}_x$, where we define ${\mathbb V}_x =\sum_{y \in X} e_{\sigma_{x}(y), y}$. 
\end{pro}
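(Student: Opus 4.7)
The plan is to verify the proposed explicit form by checking the equivalent intertwining identity $F\check r = \mathcal{P} F$. First, I would observe that each $\mathbb{V}_x = \sum_y e_{\sigma_x(y), y}$ is a permutation matrix, since $\sigma_x$ is a bijection by non-degeneracy; hence $F = \sum_x e_{x,x} \otimes \mathbb{V}_x$ is block-diagonal with permutation blocks, in particular invertible with $F^{-1} = \sum_x e_{x,x} \otimes \mathbb{V}_x^{-1}$ where $\mathbb{V}_x^{-1} = \sum_y e_{y,\sigma_x(y)}$. Consequently the similarity $\check r = F^{-1}\mathcal{P} F$ is equivalent to $F\check r = \mathcal{P} F$, which is the identity I would aim to establish.

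The next step is to expand both sides directly as sums of matrix units. Using $F = \sum_{x,y} e_{x,x}\otimes e_{\sigma_x(y),y}$, $\check r = \sum_{a,b} e_{a,\sigma_a(b)}\otimes e_{b,\tau_b(a)}$, $\mathcal{P} = \sum_{u,v} e_{u,v}\otimes e_{v,u}$, together with the elementary identity $e_{p,q}e_{r,s} = \delta_{q,r}e_{p,s}$, a routine computation yields
\begin{equation}
F\check r = \sum_{x,y} e_{x,\sigma_x(y)}\otimes e_{\sigma_x(y),\tau_y(x)}, \qquad \mathcal{P} F = \sum_{x,y} e_{\sigma_x(y),x}\otimes e_{x,y}.
\end{equation}
The heart of the argument is matching these two sums, and this is the one place where involutivity of $\check r$ enters essentially. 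Since $\check r$ is involutive, the assignment $(x,y)\mapsto(\sigma_x(y),\tau_y(x))$ is a bijection of $X\times X$ whose inverse is itself, so writing $u = \sigma_x(y)$, $v = \tau_y(x)$ yields $x = \sigma_u(v)$ and $y = \tau_v(u)$. Substituting this change of variables into the expression for $F\check r$ transforms each summand into $e_{\sigma_u(v),u}\otimes e_{u,v}$, which after renaming $(u,v)\to(x,y)$ is exactly the expression for $\mathcal{P} F$.

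The main obstacle, such as it is, is purely bookkeeping in this re-indexing; conceptually the proposition is a direct consequence of involutivity plus non-degeneracy. To reconcile the explicit formula with the spectral presentation $F^{-1} = \sum_k \hat V_k V_k^T$, I would note that both $\check r$ and $\mathcal{P}$ are involutions on $V\otimes V$ and, for involutive non-degenerate set-theoretic solutions, share the same eigenvalue multiplicities (as a trace computation confirms); thus an intertwiner realizing $\check r = F^{-1}\mathcal{P} F$ always exists, and the explicit $F$ displayed above is a canonical representative of the equivalence class of such intertwiners, with its validity confirmed by the direct verification sketched in the previous paragraph.
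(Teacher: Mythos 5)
Your verification is correct and is essentially the computation the paper itself sketches (the paper checks the equivalent identity $(F^{(op)})^{-1}F=r$ by direct computation using bijectivity of $\sigma_x,\tau_y$, deferring details to the cited references, while you check $F\check r=\mathcal{P}F$; both reduce to the same matrix-unit bookkeeping plus the involutivity-induced re-indexing $(x,y)\mapsto(\sigma_x(y),\tau_y(x))$). No gap.
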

For a detailed proof of the Proposition we refer the interested reader to \cite{DoiSmo2} and \cite{Doikoutw}.
However, by recalling that  $r = {\cal P} \check r,$ 
and using the fact that $\sigma_x,\ \tau_y$ 
are bijections, we confirm by direct computation that $(F^{(op)})^{-1} F= 
\sum_{x, \in X} e_{y, \sigma_x(y)} \otimes e_{x, \tau_y(x)}  =r$.  
This is an admissible twist as was shown in \cite{Doikoutw} and as will be 
discussed later in the text.
 
Let the Baxterized  solution of the YBE be $R(\lambda) = \lambda   r + {\cal P}.$ 
If $r$ satisfies the YBE and $r_{12} r_{21} =I$
then the Baxterized $R(\lambda)$ matrix also satisfies the YBE.
If $r= {\cal P} \check r$ is the set-theoretic solution of the YBE then, $R_{12}(\lambda)=F^{-1}_{21} (R_Y)_{12}(\lambda) F_{12}$, 
where $R_Y(\lambda) = \lambda I + {\cal P}$ is the Yangian $R$-matrix. 
This immediately follows from the form 
$R_Y(\lambda) = \lambda I+ {\cal P}$, 
and the property $F_{21}^{-1} {\cal P}_{12} {\cal F}_{12} = {\cal P}_{12}$. Note also  
that the twist is not uniquely defined, for instance an alternative twist is of the form 
$G= \sum_{x,y\in X}e_{\tau_{y}(x), x} \otimes e_{y,y} $, 
and $\sum_{x, \in X} e_{y, \sigma_x(y)} \otimes e_{x, \tau_y(x)}=  G_{21}^{-1} G_{12},$ see \cite{Doikoutw}.

$ $

Before we introduce the next fundamental quantities it is useful to prove the following Proposition. 
\begin{pro}{\label{basicl}} Let  $R_{Y}: {\mathbb C}^{n} \otimes {\mathbb C}^{n} \to 
{\mathbb C}^{n} \otimes {\mathbb C}^{n}$ be the Yangian $R$-matrix.
Let also ${\mathbb V}_{\eta} = \sum_{x \in X} e_{\sigma_{\eta}(x), x},$ $\forall \eta\in X$ consist a family of group-like elements, 
i.e. $\Delta_{Y}({\mathbb V}_{\eta}) ={\mathbb V}_{\eta} \otimes {\mathbb V}_{\eta}$ and $R(\lambda) = \lambda r +{\cal P},$
where $r$ is the set-theoretic solution $r = \sum_{x,y \in X} e_{y, \sigma_{x}(y)} \otimes e_{x, \tau_y(x)}.$ Then
\begin{equation}
\Delta^{(op)}({\mathbb V}_{\eta}) R(\lambda) = R(\lambda) \Delta({\mathbb V}_{\eta}),
\end{equation}
where $\Delta^{(op)}({\mathbb V}_{\eta}) = {\cal P} \Delta({\mathbb V}_{\eta}) {\cal P},$
\begin{equation}
\Delta({\mathbb V}_{\eta}) = \sum_{x\in X}e_{\sigma_{\eta}(x), x} \otimes \sum_{y \in X} e_{\sigma_{\tau_{x}(\eta)}(y), y}\big \vert_{C_1=0}, \label{twistdelta}
\end{equation}
and $C_1 = \sigma_{\sigma_{\eta}(x)}(\sigma_{\tau_{x}(y)}(y)) - \sigma_{\eta}(\sigma_x(y)).$
\end{pro}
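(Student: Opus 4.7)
The strategy is to derive both the intertwining relation and the explicit formula for $\Delta(\mathbb{V}_\eta)$ from the twist relation $R(\lambda) = F_{21}^{-1} R_Y(\lambda) F_{12}$ (with $F = \sum_{x\in X} e_{x,x}\otimes \mathbb{V}_x$ as in Proposition \ref{twistlocal}), combined with the group-like hypothesis on $\mathbb{V}_\eta$ and Proposition \ref{twist}. Note that because $R(\lambda) = \lambda r + \mathcal{P}$ and the permutation piece trivially satisfies $\mathcal{P}\Delta(\mathbb{V}_\eta) = \Delta^{(op)}(\mathbb{V}_\eta)\mathcal{P}$ (by the very definition of $\Delta^{(op)}$), all the substantive content reduces to the identity $\Delta^{(op)}(\mathbb{V}_\eta)\,r = r\,\Delta(\mathbb{V}_\eta)$; once $\Delta(\mathbb{V}_\eta)$ is correctly identified, this intertwining is a direct consequence of Proposition \ref{twist}.

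First I would apply Proposition \ref{twist} to the Yangian $\mathcal{Y}(\mathfrak{gl}_n)$ with the twist $\mathcal{F}:=F^{-1}$. The twisted data are then $\mathcal{R}_\mathcal{F}=\mathcal{F}^{(op)} R_Y \mathcal{F}^{-1}=F_{21}^{-1} R_Y F_{12}=R(\lambda)$ and $\Delta_\mathcal{F}(a)=F^{-1}\Delta_Y(a)F$. Specialising to $a=\mathbb{V}_\eta$ and invoking the group-like assumption $\Delta_Y(\mathbb{V}_\eta)=\mathbb{V}_\eta\otimes \mathbb{V}_\eta$ immediately produces
\[
\Delta(\mathbb{V}_\eta)=F^{-1}(\mathbb{V}_\eta\otimes \mathbb{V}_\eta)F,
\]
and axiom (1) of Definition \ref{definition2} for the twisted $R$-matrix yields the intertwining $\Delta^{(op)}(\mathbb{V}_\eta)R(\lambda)=R(\lambda)\Delta(\mathbb{V}_\eta)$.

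It then remains to compute the right-hand side and match it with the formula in the statement. Using $F^{-1}=\sum_x e_{x,x}\otimes \mathbb{V}_x^{-1}$, a short bookkeeping calculation that collapses the double sum via $e_{z,z}e_{\sigma_\eta(x),x}=\delta_{z,\sigma_\eta(x)}e_{\sigma_\eta(x),x}$ yields
\[
\Delta(\mathbb{V}_\eta)=\sum_{x\in X} e_{\sigma_\eta(x),x}\otimes \mathbb{V}_{\sigma_\eta(x)}^{-1}\mathbb{V}_\eta \mathbb{V}_x.
\]
The main obstacle, and the one place where the set-theoretic structure enters essentially, is simplifying the product $\mathbb{V}_{\sigma_\eta(x)}^{-1}\mathbb{V}_\eta \mathbb{V}_x$. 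Since $\mathbb{V}_a$ realises the permutation $\sigma_a$ on $X$ as a matrix, this product represents the composite $\sigma_{\sigma_\eta(x)}^{-1}\circ \sigma_\eta\circ \sigma_x$. I would then invoke the first-coordinate consequence of the braid equation for $\check r$, namely $\sigma_\eta\circ\sigma_x=\sigma_{\sigma_\eta(x)}\circ \sigma_{\tau_x(\eta)}$, to conclude that $\sigma_{\sigma_\eta(x)}^{-1}\sigma_\eta\sigma_x=\sigma_{\tau_x(\eta)}$ and hence $\mathbb{V}_{\sigma_\eta(x)}^{-1}\mathbb{V}_\eta \mathbb{V}_x=\sum_y e_{\sigma_{\tau_x(\eta)}(y),y}$. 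Substituting back reproduces the stated formula; the displayed constraint $C_1=0$ is precisely the element-wise content of this same braid identity and so is automatic. The only real care needed is bookkeeping the direction of the permutation composition and correctly identifying $\mathbb{V}_a^{-1}$ with the inverse permutation (rather than a transpose) before invoking the braid identity.
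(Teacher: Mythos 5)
Your proposal is correct and follows essentially the same route as the paper: conjugate the Yangian intertwining relation $\Delta_Y({\mathbb V}_{\eta})R_Y = R_Y\Delta_Y({\mathbb V}_{\eta})$ by the twist $F$ to obtain the intertwining for $\Delta({\mathbb V}_{\eta})=F^{-1}({\mathbb V}_{\eta}\otimes{\mathbb V}_{\eta})F$, and then evaluate this conjugation explicitly using the braid-equation identity $\sigma_{\eta}\circ\sigma_x=\sigma_{\sigma_{\eta}(x)}\circ\sigma_{\tau_x(\eta)}$, which is precisely the content of $C_1=0$. Your packaging of the second tensor factor as ${\mathbb V}_{\sigma_{\eta}(x)}^{-1}{\mathbb V}_{\eta}{\mathbb V}_x={\mathbb V}_{\tau_x(\eta)}$ is a cleaner rendering of the paper's index-by-index derivation of the constraint $\bar x=\sigma_{\tau_{\xi}(\eta)}(\hat x)$, and invoking Proposition \ref{twist} is harmless since only its conjugation step (axiom (1) of Definition \ref{definition2}, which requires no cocycle condition) is actually used.
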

\begin{proof}
By Proposition \ref{twistlocal} we have $R(\lambda) =( F^{(op)})^{-1} R_{Y}(\lambda) F,$
where we recall $F = \sum_{x\in X} e_{x,x} \otimes {\mathbb V}_x$. 
Recall also $\Delta_{Y}({\mathbb V}_{\eta}) = {\mathbb V}_{\eta} \otimes {\mathbb V}_{\eta},$ then
\begin{eqnarray}
&& \Delta_Y({\mathbb V}_{\eta}) R_Y(\lambda) = R_Y(\lambda)\Delta_{Y}({\mathbb V}_{\eta})\ \Rightarrow\  \cr
&&  (F^{(op)})^{-1}\big (  \Delta_Y({\mathbb V}_{\eta}) R_Y(\lambda)\big ) F =(F^{(op)})^{-1}
\big ( R_Y(\lambda)\Delta_{Y}({\mathbb V}_{\eta}) \big ) F\ \Rightarrow\cr
&&  \Delta^{(op)}({\mathbb V}_{\eta}) R(\lambda) = R(\lambda)\Delta({\mathbb V}_{\eta}),
\end{eqnarray}
where $\Delta({\mathbb V}_{\eta}) = F^{-1}\Delta_{Y}({\mathbb V}_{\eta}) F.$ By recalling that $F=\sum_{\eta} e_{\eta, \eta} 
\otimes {\mathbb V}_{\eta}$ and ${\mathbb V}_{\eta} = \sum_{x\in X} e_{\sigma_{\eta}(x), x}$  we obtain
\begin{eqnarray}
\Delta({\mathbb V}_{\eta}) &=& \big (\sum_{\bar \eta, \bar x\in X} e_{\bar \eta, \bar \eta} \otimes e_{\sigma_{\bar \eta}(\bar x), \bar x} \big ) 
\big ( \sum_{\xi \in X}e_{\sigma_{\eta}(\xi), \xi} \otimes \sum_{\zeta \in X}e_{\sigma_{\eta}(\zeta), \zeta}\big ) 
\big (\sum_{\hat \eta, \hat x\in X} e_{\hat \eta, \hat \eta} \otimes e_{\hat x, \sigma_{\hat\eta}(\hat x)}\big ) \cr
& = & \sum_{\xi, \bar x, \hat x \in X} e_{\sigma_{\eta}(\xi), \xi}\otimes e_{\bar x, \hat x}
\end{eqnarray}
subject to the following constraints: $\sigma_{\sigma_{\eta}(\xi)}(\bar x) = \sigma_{\eta}(\zeta)$ and $\zeta = \sigma_{\xi}(\hat x),$ which lead to 
\begin{equation}
\sigma_{\sigma_{\eta}(\xi)}(\bar x) = \sigma_{\eta}(\sigma_{\xi}(\hat x)). \label{cc}
\end{equation}
Also,  the constraint $C_1 =0$ holds for the twist $F$ to be admissible,  i.e. $\sigma_{\sigma_{\eta}(\xi)}(\sigma_{\tau_{\xi}(\eta)}(\hat x)) =  \sigma_{\eta}(\sigma_{\xi}(\hat x))$, 
which combined with (\ref{cc})  leads to $\bar x = \sigma_{\tau_{\xi}(\eta)}(\hat x),$ and thus
\begin{equation}
\Delta({\mathbb V}_{\eta}) = \sum_{x\in X}e_{\sigma_{\eta}(x), x} \otimes \sum_{y \in X} e_{\sigma_{\tau_{x}(\eta)}(y), y}\big \vert_{C_1=0}.
\end{equation}
Note that the constraint $C_1=0$  holds so that  the set-theoretic $r$-matrix satisfies the YBE (details on the constraints 
due to YBE in the form used here, see also e.g.  
\cite{Doikoutw} and relevant references therein).
\end{proof}

We recall that in Proposition 3.13 in \cite{Doikoutw} the following quantities were introduced:
\begin{eqnarray}
&&  F_{1,23} = \sum_{x, y, \eta \in X} e_{\sigma_{\eta}(x), \sigma_{\eta}(x)} \otimes e_{\eta, \tau_x(\eta)}\
 \otimes e_{\sigma_{x}(y), y}\big \vert_{C_1=0} \label{cop1}\\
&& F^*_{12,3} =\sum_{x, y, \eta \in X} e_{\sigma_{\eta}(x), \sigma_{\eta}(x)} \otimes e_{\tau_x(\eta), \tau_x(\eta) }\ 
\otimes e_{\sigma_{\eta}(\sigma_{x}(y)), y}\big \vert_{C_1=0} \label{cop2a}
\end{eqnarray}
where $C_1 = \sigma_{\sigma_{\eta}(x)}(\sigma_{\tau_{x}(y)}(y)) - \sigma_{\eta}(\sigma_x(y))$. 
Let also $\check r = \sum_{x, y \in X } e_{x, \sigma_{x}(y)} \otimes e_{y, \tau_{y}(x)}$, then 
\begin{equation}
 \check r_{12} F^*_{12,3} = F^*_{12,3} \check r_{12},\  
\quad  \check r_{23} F_{1,23} = F_{1,23}\check r_{23}. \label{comut1} 
\end{equation}
The detailed proof of (\ref{comut1})  is given in \cite{Doikoutw}.

This is a straightforward, but useful comment.  Recall, 
$r = {\cal P} \check r$, where ${\cal P}$ is the permutation operator.
If $F^*_{12,3} \check r_{12} = \check r_{12}F^*_{12,3},$ and 
$F_{1,23} \check r_{23} = \check r_{23}F_{1,23},$ 
then by multiplying the latter two equalities with ${\cal P}$ 
from the left we conclude: 
$F^*_{21,3}  r_{12} = r_{12}F^*_{12,3},$  and
$F_{1,32} r_{23} =  r_{23}F_{1,23}.$

A useful Corollary follows which is key in formulating our main Conjecture later.

\begin{cor}{\label{rem5}} Consider $F_{1,23}$ and $F^*_{12,3}$ defined in (\ref{cop1}) and (\ref{cop2a}) respectively. 
Then $F_{1,23} = (\mbox{id} \otimes \Delta)F,$ but $F^*_{12,3} \neq  (\Delta \otimes \mbox{id}) F.$ Also,  
if $\epsilon({\mathbb V}_{\eta}) =1,$ $\forall \eta \in X,$ then
$(\epsilon \otimes \mbox{id}) F  \neq I$ and $ (\mbox{id} \otimes \epsilon) F = I.$
\end{cor}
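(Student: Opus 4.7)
The three claims of the corollary split naturally. For $F_{1,23} = (\mbox{id}\otimes\Delta)F$, the plan is to expand $F = \sum_{\eta\in X} e_{\eta,\eta}\otimes{\mathbb V}_{\eta}$, apply $\mbox{id}\otimes\Delta$ termwise, and substitute the explicit formula for $\Delta({\mathbb V}_{\eta})$ derived in Proposition \ref{basicl}. This yields the triple sum
\[
(\mbox{id}\otimes\Delta)F \;=\; \sum_{\eta,x,y\in X} e_{\eta,\eta}\otimes e_{\sigma_{\eta}(x),x}\otimes e_{\sigma_{\tau_{x}(\eta)}(y),y}\Big|_{C_{1}=0}.
\]
To match the expression (\ref{cop1}) for $F_{1,23}$, the key step is the relabelling $(\eta,x)\mapsto(\eta',x'):=(\sigma_{\eta}(x),\tau_{x}(\eta))$: this is a bijection on $X\times X$ because involutivity of $\check r$ gives $\check r(\eta',x')=(\eta,x)$, and hence $\eta=\sigma_{\eta'}(x')$, $x=\tau_{x'}(\eta')$. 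Plugging these identities back turns the three tensor factors into $e_{\sigma_{\eta'}(x'),\sigma_{\eta'}(x')}$, $e_{\eta',\tau_{x'}(\eta')}$, $e_{\sigma_{x'}(y),y}$, which is precisely $F_{1,23}$.

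For the inequality $F^{*}_{12,3}\neq(\Delta\otimes\mbox{id})F$, the shortest argument is definitional: by Remark \ref{remdr} one has $F^{*}_{12,3}:=\big((\Delta\otimes\mbox{id})F\big)\Phi^{-1}$, so equality would force $\Phi=1\otimes 1\otimes 1$ and the twisted algebra would collapse to a genuine bialgebra. Inspection of (\ref{cop2a}) against the expected form $(\Delta\otimes\mbox{id})F=\sum_{x}\Delta(e_{x,x})\otimes{\mathbb V}_{x}$ already signals this: the first two slots of (\ref{cop2a}) consist of diagonal idempotents $e_{\sigma_{\eta}(x),\sigma_{\eta}(x)}\otimes e_{\tau_{x}(\eta),\tau_{x}(\eta)}$, while $\Delta(e_{x,x})=F^{-1}\Delta_{Y}(e_{x,x})F$ generates off-diagonal contributions through the twist. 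A concrete confirmation is provided by the Lyubashenko specialization (Example \ref{cor1L} with $u={\mathbb V}$), for which $\Phi=1\otimes 1\otimes{\mathbb V}^{-1}$ is manifestly non-trivial whenever $\sigma\neq\mbox{id}_{X}$.

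The counit assertions follow by direct substitution. Under the hypothesis $\epsilon({\mathbb V}_{\eta})=1$ together with the completeness relation $\sum_{\eta}e_{\eta,\eta}=I$ for the diagonal idempotents of the first factor, one finds
\[
(\mbox{id}\otimes\epsilon)F \;=\; \sum_{\eta}e_{\eta,\eta}\cdot\epsilon({\mathbb V}_{\eta}) \;=\; \sum_{\eta}e_{\eta,\eta} \;=\; I.
\]
In the opposite slot, $(\epsilon\otimes\mbox{id})F = \sum_{\eta}\epsilon(e_{\eta,\eta}){\mathbb V}_{\eta}$; since $\{e_{\eta,\eta}\}$ is a family of mutually orthogonal idempotents with $\sum_{\eta}e_{\eta,\eta}=I$, the algebra homomorphism $\epsilon:{\cal A}\to\mathbb{C}$ must take values in $\{0,1\}$ on them with exactly one index $\eta_{0}$ where $\epsilon(e_{\eta_{0},\eta_{0}})=1$, so that $(\epsilon\otimes\mbox{id})F = {\mathbb V}_{\eta_{0}}$; this differs from $I$ whenever $\sigma_{\eta_{0}}\neq\mbox{id}_{X}$, which holds for any genuinely non-trivial set-theoretic solution.

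The main obstacle will be the second part: the identification $F^{*}_{12,3}=(\Delta\otimes\mbox{id})F\Leftrightarrow\Phi=1\otimes 1\otimes 1$ only reduces the claim to demonstrating non-triviality of the associator, and establishing this in the abstract generality of the proposition requires either an explicit tracking of the off-diagonal terms produced by $F^{-1}\Delta_{Y}(e_{x,x})F$---terms that are visibly absent from the diagonal structure in (\ref{cop2a})---or an appeal to a concrete family such as Lyubashenko's solution to witness a non-trivial $\Phi$. A subsidiary subtlety is the careful interpretation of $\epsilon$ acting on matrix idempotents in the third part, which is handled cleanly once one uses their orthogonality and completeness.
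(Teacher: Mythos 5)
Your treatment of the first and third claims is essentially the paper's: expand $F=\sum_{\eta}e_{\eta,\eta}\otimes{\mathbb V}_{\eta}$, insert $\Delta({\mathbb V}_{\eta})$ from (\ref{twistdelta}), and match against (\ref{cop1}). The change of summation variables $(\eta,x)\mapsto(\sigma_{\eta}(x),\tau_{x}(\eta))$ that you make explicit is precisely the step the paper leaves implicit, and your use of involutivity to invert it is correct. The counit computation is likewise the paper's; your digression on $\epsilon$ taking the value $1$ on exactly one of the idempotents $e_{\eta,\eta}$ is unnecessary and untenable as stated (there is no algebra homomorphism $\mbox{End}({\mathbb C}^{n})\to{\mathbb C}$ for $n>1$; in the paper $\epsilon$ is the Yangian counit read through the evaluation-representation shorthand), but this does not affect the conclusion $(\epsilon\otimes\mbox{id})F=\sum_{\eta}\epsilon(e_{\eta,\eta}){\mathbb V}_{\eta}\neq I$.

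The gap is in the second claim. First, the reduction ``$F^{*}_{12,3}=(\Delta\otimes\mbox{id})F$ would force $\Phi=1\otimes1\otimes1$'' presupposes the identity $F^{*}_{12,3}=\big((\Delta\otimes\mbox{id})F\big)\Phi^{-1}$, which in the paper is only Conjecture \ref{conj1}, stated \emph{after} and motivated \emph{by} this corollary; invoking it here is circular. Second, your direct argument rests on a false premise: conjugating $\Delta_{Y}(e_{\eta,\eta})=e_{\eta,\eta}\otimes I+I\otimes e_{\eta,\eta}$ by $F$ does \emph{not} create off-diagonal terms --- one finds $\Delta(e_{\eta,\eta})=e_{\eta,\eta}\otimes I+\sum_{\xi}e_{\sigma_{\eta}(\xi),\sigma_{\eta}(\xi)}\otimes e_{\tau_{\xi}(\eta),\tau_{\xi}(\eta)}$, still diagonal in both slots (this also follows from Corollary \ref{prop2b} with $z=w$, since non-degeneracy turns the constraint $\sigma_{z}(\xi)=\sigma_{z}(\zeta)$ into $\xi=\zeta$). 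The paper's argument is a one-line computation that needs none of this: write $(\Delta\otimes\mbox{id})F=\sum_{\eta}\Delta(e_{\eta,\eta})\otimes{\mathbb V}_{\eta}$ and observe it cannot coincide with (\ref{cop2a}) --- it carries the extra additive piece $\sum_{\eta}e_{\eta,\eta}\otimes I\otimes{\mathbb V}_{\eta}$ coming from the primitive part of $\Delta(e_{\eta,\eta})$, and its third tensor factor is ${\mathbb V}_{\eta}=\sum_{y}e_{\sigma_{\eta}(y),y}$ rather than $\sum_{y}e_{\sigma_{\eta}(\sigma_{x}(y)),y}$. The Lyubashenko specialization is a fine sanity check but only witnesses the inequality for one family, not for a general non-degenerate involutive solution.
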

\begin{proof}
We recall that $F=\sum_{\eta\in X} e_{\eta, \eta} \otimes {\mathbb V}_{\eta}$, then $(\mbox{id} \otimes \Delta) F = 
\sum_{\eta \in X} e_{\eta, \eta} \otimes \Delta({\mathbb V}_{\eta}).$ Recalling also the form of $\Delta({\mathbb V}_{\eta})$ 
in (\ref{twistdelta}) and the defintion (\ref{cop1}) we conclude that $F_{1,23}=(\mbox{id} \otimes \Delta) F.$ In a similar fashion, 
$(\Delta \otimes \mbox{id}) F = \sum_{\eta \in X} \Delta(e_{\eta, \eta})\otimes \sum_{x\in X}e_{\sigma_{\eta}(x), x} \neq F^*_{12,3}$ (\ref{cop2a}).

Also, we can immediately deduce that $(\mbox{id} \otimes \epsilon) F = \sum_{\eta \in X} e_{\eta, \eta}=I$ and\\
$(\epsilon \otimes \mbox{id}) F  = \sum_{\eta \in X} \epsilon(e_{\eta, \eta}) {\mathbb V}_{\eta} \neq I,$ which is
 compatible with  Proposition \ref{twist}.
\end{proof}

The admissibility of the twist is proven in Proposition 3.15 in \cite{Doikoutw}, and is stated below:
\begin{pro}{\label{cocycle} (\cite{Doikoutw})}
 Let $F_{12} =F  \otimes I$ and $F_{23} =I \otimes F$, where 
$F=\sum_{\eta, x,y \in X} e_{\eta, \eta} \otimes e_{\sigma_{\eta}(x), x}$.
Let also  $F^*_{12,3}$ and $F_{1,23}$ defined in (\ref{cop1}) and (\ref{cop2a}). Then 
\begin{equation}
F_{123}:=F_{12} F^*_{12,3} =F_{23}F_{1,23}.
\end{equation}
\end{pro}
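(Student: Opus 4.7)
The plan is to verify the identity $F_{12}F^*_{12,3}=F_{23}F_{1,23}$ by direct index calculation, using the explicit tensor forms already established in equations (\ref{cop1}) and (\ref{cop2a}) together with the defining formula $F=\sum_{\eta\in X} e_{\eta,\eta}\otimes {\mathbb V}_\eta$, and then invoking involutivity of $\check r$ at a single key place. Throughout, the constraint $C_1=0$ is preserved by the computation, so it may be carried as a silent side condition.

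First I would expand the left-hand side. Since $F_{12}=\sum_{\eta,x}e_{\eta,\eta}\otimes e_{\sigma_\eta(x),x}\otimes I$, the product $F_{12}F^*_{12,3}$ requires us to contract in factors one and two with the corresponding factors $e_{\sigma_{\bar\eta}(\bar x),\sigma_{\bar\eta}(\bar x)}$ and $e_{\tau_{\bar x}(\bar\eta),\tau_{\bar x}(\bar\eta)}$ of $F^*_{12,3}$. The diagonal form of those factors forces $\eta=\sigma_{\bar\eta}(\bar x)$ and $x=\tau_{\bar x}(\bar\eta)$, so the summation in $\eta,x$ collapses and we obtain
\begin{equation}
F_{12}F^*_{12,3}=\sum_{\bar x,y,\bar\eta\in X} e_{\sigma_{\bar\eta}(\bar x),\sigma_{\bar\eta}(\bar x)}\otimes e_{\sigma_{\sigma_{\bar\eta}(\bar x)}(\tau_{\bar x}(\bar\eta)),\tau_{\bar x}(\bar\eta)}\otimes e_{\sigma_{\bar\eta}(\sigma_{\bar x}(y)),y}\bigg\vert_{C_1=0}. \nonumber
\end{equation}
At this point I would apply the involutivity of $\check r$: since $\check r(\bar\eta,\bar x)=(\sigma_{\bar\eta}(\bar x),\tau_{\bar x}(\bar\eta))$ and $\check r^2=\mathrm{id}$, we get $\sigma_{\sigma_{\bar\eta}(\bar x)}(\tau_{\bar x}(\bar\eta))=\bar\eta$, so the middle factor reduces to $e_{\bar\eta,\tau_{\bar x}(\bar\eta)}$.

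Next I would expand the right-hand side. Writing $F_{23}=\sum_{\eta,y}I\otimes e_{\eta,\eta}\otimes e_{\sigma_\eta(y),y}$ and multiplying by $F_{1,23}$, the diagonal factor $e_{\eta',\tau_{\bar x}(\eta')}$ in position two of $F_{1,23}$ enforces $\eta=\eta'$, while the matrix product in position three yields $e_{\sigma_\eta(y),y}\cdot e_{\sigma_{\bar x}(\bar y),\bar y}=\delta_{y,\sigma_{\bar x}(\bar y)}\,e_{\sigma_\eta(\sigma_{\bar x}(\bar y)),\bar y}$. Collecting terms produces precisely the same expression as above, establishing the equality.

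The computation is essentially bookkeeping, with the one genuine input being the involutivity identity $\sigma_{\sigma_\eta(x)}(\tau_x(\eta))=\eta$ used to simplify the middle tensor factor of $F_{12}F^*_{12,3}$; this is the only step that is not pure reindexing, and it is the main (mild) obstacle to ensure the two sides match term by term. No new conditions on $\sigma,\tau$ beyond involutivity and the already-assumed $C_1=0$ are needed, so admissibility of the twist follows at once.
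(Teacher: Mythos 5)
Your proof is correct and follows essentially the same route as the paper, which simply asserts the identity "by direct computation" after substituting the explicit forms of $F_{12}$, $F_{23}$, $F^*_{12,3}$ and $F_{1,23}$; you have actually carried out that bookkeeping, and your use of involutivity ($\sigma_{\sigma_\eta(x)}(\tau_x(\eta))=\eta$) to reduce the middle factor of the left-hand side is exactly the one non-trivial simplification needed to land on the paper's stated form $F_{123}=\sum_{\eta,x,y} e_{\sigma_{\eta}(x),\sigma_{\eta}(x)}\otimes e_{\eta,\tau_x(\eta)}\otimes e_{\sigma_{\eta}(\sigma_{x}(y)),y}\vert_{C_1=0}$. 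Both sides indeed reduce to the same triple sum with the same constraint $C_1=0$, so the argument is complete.
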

\noindent The proof is straightforward although tedious.
By substituting the expressions for $F_{12},\ F_{23}$, $F^*_{12,3}$ and $F_{1,23}$ 
(recall $C_1=0$ holds for $F^*_{12,3},$ $F_{1,23}$) we obtain by direct computation:
$F_{12}F ^*_{12,3} = F_{23}F_{1,23}.$
The explicit form the 3-twist is given from the expressions above as\\
$F_{123}= \sum_{\eta,x,y \in X}  
e_{\sigma_{\eta}(x), \sigma_{\eta}(x)} \otimes e_{\eta, \tau_x(\eta)} \otimes  
e_{\sigma_{\eta}(\sigma_{x}(y)),y}\vert_{C_1=0},$ \cite{Doikoutw}.

Given the findings of the first section based on Propositions \ref{lemma1}, \ref{twist}, Remark \ref{remdr}
and  Corollary \ref{rem5} we formulate the following conjecture:
\begin{conj}{\label{conj1}}
The element $F^*_{12,3}$ can be expressed as $F_{12,3}^* = \big( (\Delta \otimes \mbox{id}) F\big )\Phi^{-1},$ 
where $\Phi\in \mbox{End}(({\mathbb C}^n)^{\otimes 3})$ is an invertible element such that $\Phi_{213} r_{12} = r_{12} \Phi_{123}.$
\end{conj}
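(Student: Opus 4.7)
The plan is to exhibit $\Phi$ explicitly as the tautological combination forced by the first defining equation, and then verify the intertwining property $\Phi_{213}\,r_{12} = r_{12}\,\Phi_{123}$ as a commutation statement resting on already-established facts. Concretely, I set $\Phi := (F^*_{12,3})^{-1}\bigl((\Delta\otimes\mbox{id})F\bigr)$. Both factors are invertible in $\mbox{End}\bigl(({\mathbb C}^n)^{\otimes 3}\bigr)$, so $\Phi$ is invertible, and by construction $F^*_{12,3} = \bigl((\Delta\otimes\mbox{id})F\bigr)\Phi^{-1}$, which verifies half of the conjecture trivially.

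Next I reduce the target identity to a commutator. Using $r = {\cal P}\check r$ and ${\cal P}_{12}\Phi_{123}{\cal P}_{12} = \Phi_{213}$, the relation $\Phi_{213}\,r_{12} = r_{12}\,\Phi_{123}$ is equivalent to $[\check r_{12},\,\Phi] = 0$ on $V^{\otimes 3}$. It therefore suffices to prove that $\check r_{12}$ commutes separately with both factors defining $\Phi$. The first commutation, $[\check r_{12},\,F^*_{12,3}] = 0$, is exactly equation (\ref{comut1}), established in \cite{Doikoutw}. For the second, I use the explicit form $(\Delta\otimes\mbox{id})F = \sum_{\eta\in X}\Delta(e_{\eta,\eta})\otimes{\mathbb V}_\eta$; since ${\mathbb V}_\eta$ sits on the third tensor slot, $\check r_{12}$ leaves it untouched, and the claim reduces to $[\check r_{12},\,\Delta(e_{\eta,\eta})] = 0$ on $V\otimes V$ for each $\eta\in X$.

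This last commutator vanishes by a standard conjugation argument: the permutation operator ${\cal P}$ is $\mathfrak{gl}_n$-invariant, so $[{\cal P},\,\Delta_Y(e_{x,y})] = 0$. Reading off the leading-$\lambda$ coefficient of $\check R(\lambda) = F^{-1}\check R_Y(\lambda)F$ gives $\check r = F^{-1}{\cal P}F$, while on the representation the twisted coproduct is $\Delta(a) = F^{-1}\Delta_Y(a)F$. Conjugating the Yangian commutation by $F$ then yields $[\check r,\,\Delta(e_{x,y})] = 0$ for all $x,y$, and in particular for $x=y=\eta$.

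Assembling these pieces delivers $[\check r_{12},\,\Phi] = 0$, and conjugation by ${\cal P}_{12}$ gives the hexagon-type identity $\Phi_{213}\,r_{12} = r_{12}\,\Phi_{123}$. I expect the main obstacle to be essentially bookkeeping: one must carefully track which tensor factor each operator acts upon (in particular, that the identification of the third factor of $(\Delta\otimes\mbox{id})F$ as inert under $\check r_{12}$ is unambiguous), and one must ensure that the identifications between algebraic and representation-theoretic versions of $\Delta$ and $F$ are consistent throughout. Once this bookkeeping is handled, no algebraic input beyond the $\mathfrak{gl}_n$-invariance of ${\cal P}$ and the cocycle-type relation (\ref{comut1}) is required, and the conjecture follows.
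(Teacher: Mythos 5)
First, note that the paper does not actually prove this statement: it is left as a Conjecture, and the only supporting material the authors supply is (i) the same formal manipulation you perform (they write that one can ``formally'' set $\Phi^{-1} = \big((\Delta\otimes\mbox{id})F^{-1}\big)F^*_{12,3}$ and observe that both $F^*_{12,3}$ and $(\Delta\otimes\mbox{id})F$ commute with $\check r_{12}$), and (ii) an explicit verification for the Lyubashenko solution. The commutation half of your argument is sound and coincides with the paper's remarks: $[\check r_{12},F^*_{12,3}]=0$ is exactly (\ref{comut1}), and $[\check r_{12},\Delta(e_{\eta,\eta})\otimes I]=0$ follows by conjugating $[{\cal P},\Delta_Y(e_{\eta,\eta})]=0$ by $F_{12}$, using $\check r = F^{-1}{\cal P}F$ and $\Delta = F^{-1}\Delta_Y F$. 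So \emph{if} the tautological $\Phi$ existed as an invertible element, the literal statement would indeed follow.

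The genuine gap is your opening claim that ``both factors are invertible,'' specifically for $(\Delta\otimes\mbox{id})F$. The matrix $F=\sum_{\eta}e_{\eta,\eta}\otimes{\mathbb V}_\eta$ does not determine an element of ${\cal Y}(\mathfrak{gl}_n)^{\otimes 2}$, so $(\Delta\otimes\mbox{id})F$ only makes sense through a choice of lift, and the paper itself uses two inconsistent ones: in Corollary \ref{rem5} the first leg is read through the twisted primitive coproduct $\Delta(e_{\eta,\eta})=F^{-1}(e_{\eta,\eta}\otimes I+I\otimes e_{\eta,\eta})F$ of Corollary \ref{prop2b}, whereas in the Lyubashenko example the first leg $I=\sum_\eta e_{\eta,\eta}$ is treated as group-like, $\Delta(I)=I\otimes I$. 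Under the primitive reading --- the one your formula $(\Delta\otimes\mbox{id})F=\sum_\eta\Delta(e_{\eta,\eta})\otimes{\mathbb V}_\eta$ invokes --- one finds $(\Delta\otimes\mbox{id})F=F_{12}^{-1}(F_{13}+F_{23})F_{12}$, whose invertibility is that of $F_{13}+F_{23}=\sum_{\eta,\eta'}e_{\eta,\eta}\otimes e_{\eta',\eta'}\otimes({\mathbb V}_\eta+{\mathbb V}_{\eta'})$; a block ${\mathbb V}_\eta+{\mathbb V}_{\eta'}$ is singular whenever the permutation $\sigma_{\eta'}^{-1}\sigma_\eta$ contains an even cycle (e.g.\ a transposition), which does occur for non-Lyubashenko involutive solutions. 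Since $F^*_{12,3}$ in (\ref{cop2a}) \emph{is} invertible (a direct sum of permutation matrices), the identity $F^*_{12,3}=\big((\Delta\otimes\mbox{id})F\big)\Phi^{-1}$ with $\Phi$ invertible is then impossible under that reading. Relatedly, the primitive assignment is not an algebra homomorphism of $\mbox{End}({\mathbb C}^n)$ (it fails $e_{\eta,\eta}e_{\eta',\eta'}=\delta_{\eta\eta'}e_{\eta,\eta}$), so $(\Delta\otimes\mbox{id})(F^{-1})$ is not the inverse of $(\Delta\otimes\mbox{id})F$ and the ``tautological'' construction of $\Phi$ collapses. The open content of the conjecture is precisely to exhibit a consistent (preferably universal) definition of $(\Delta\otimes\mbox{id})F$ for which the quotient $\Phi$ exists, is invertible, and --- if the intended quasi-bialgebra conclusion is to follow --- also satisfies the remaining axioms of Definition \ref{definition1}; none of this is addressed by the bookkeeping you describe.
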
 
\noindent This conjecture suggests that the quantum group emerging from set-theoretic solutions of the YBE is a quasi-bialgebra and is compatible with Propositions \ref{lemma1} and \ref{twist}.
A few comments are in order at this point: note that the explicit expression for $F^*_{12,3}$ is given in (\ref{cop2a}), then following the statement in Conjecture \ref{conj1} 
and recalling that $\Delta$ is an algebra homomorphism we can formally write $\Phi^{-1} = \big ((\Delta \otimes \mbox{id}) F^{-1}\big ) F_{12,3}^*.$ Also, it was shown in \cite{Doikoutw} that 
$\big [ F_{12,3}^*, \check r_{12} \big ] =0$ and by definition $\big [(\Delta \otimes \mbox{id}) F,\ \check r_{12} \big ] =0,$ so we conclude that $\Phi_{123}$ should also commute with $\check r_{12}$.

$ $

We will be discussing below the symmetries of the set-theoretic $r$-matrix and the corresponding Baxterized solutions.

\begin{cor}\label{prop2b} Let  $R_{Y}: {\mathbb C}^{n} \otimes {\mathbb C}^{n} \to 
{\mathbb C}^{n} \otimes {\mathbb C}^{n}$ be the Yangian 
$R-$matrix and $R(\lambda) = \lambda r +{\cal P},$
where $r$ is the set-theoretic solution $r = \sum_{x,y \in X} e_{y,\sigma_{x}(y)} \otimes e_{x, \tau_y(x)}.$ Then 
\begin{equation}
\Delta^{(op)}(e_{z,w}) R(\delta) = R(\delta) \Delta(e_{z, w}), ~~\Delta^{(op)}(f_{z,w}; \lambda_1, \lambda_2) R(\lambda) = 
R(\lambda) \Delta(f_{z,w}; \lambda_1, \lambda_2)
\end{equation}
where $\delta : = \lambda_1-  \lambda_2, $ and
\begin{eqnarray}
&& \Delta(e_{z,w}) = \sum_{\xi, \zeta \in X} \big (e_{z,w} \otimes e_{\xi, \zeta} + e_{\sigma_z(\xi), \sigma_{w}(\zeta)} 
\otimes e_{\tau_{\xi}(z), \tau_{\zeta(w)}}\big )_{\sigma_z(\xi)=\sigma_w(\zeta)}\cr
&& \Delta(f_{z,w}; \lambda_1, \lambda_2)=  \sum_{\xi, \zeta \in X} \big (\lambda_1 e_{z,w} \otimes
 e_{\xi, \zeta} + \lambda_2 e_{\sigma_z(\xi), \sigma_{w}(\zeta)} \otimes e_{\tau_{\xi}(z), \tau_{\zeta(w)}}\big )_{\sigma_z(\xi)=\sigma_w(\zeta)}. \nonumber \\
&& \qquad \qquad \qquad \quad {1\over 2} \sum_{y, \hat y \in X}\big(e_{z,\sigma_z(y)}
\otimes e_{y,\hat y}\vert_{w = \sigma_{\sigma_z(y)}(\hat y)} -e_{\sigma_{w}(y), w} \otimes e_{\hat y,y}\vert_{z = \sigma_{\sigma_w(y)}(\hat y)} \big )
\label{twistdeltab}
\end{eqnarray}
Moreover, the matrix $\check R(\lambda) = {\cal P} R(\lambda)$ is $\mathfrak{gl}_n$ symmetric.
\end{cor}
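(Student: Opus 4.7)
The plan is to leverage the factorization $R(\lambda) = F_{21}^{-1}R_Y(\lambda)F_{12}$ established in Proposition \ref{twistlocal}, together with the Yangian intertwining identities (\ref{basicY}). Conjugating those identities by $F_{12}$ on the right and $F_{21}^{-1}$ on the left yields, for any $a \in {\cal Y}(\mathfrak{gl}_n)$,
\[
R(\delta)\Delta(a) = \Delta^{(op)}(a)R(\delta), \qquad \Delta(a) := F^{-1}\Delta_Y(a) F,
\]
in exact analogy with the proof of Proposition \ref{basicl}. Applied to the evaluation images (\ref{eval}) of ${\mathrm Q}^{(1)}_{z,w}$ and ${\mathrm Q}^{(2)}_{z,w}$ this already delivers the two claimed intertwining relations; it then only remains to identify the explicit matrix form of $\Delta(e_{z,w})$ and $\Delta(f_{z,w};\lambda_1,\lambda_2)$.

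For the coproduct of $e_{z,w}$, I would split $\Delta_Y(e_{z,w}) = e_{z,w}\otimes I + I \otimes e_{z,w}$ and treat the two pieces separately using $F = \sum_{\eta\in X} e_{\eta,\eta}\otimes {\mathbb V}_\eta$. The first piece yields $F^{-1}(e_{z,w}\otimes I)F = e_{z,w}\otimes {\mathbb V}_z^{-1}{\mathbb V}_w$, and multiplying out the two permutation matrices produces a sum over index pairs $(\xi,\zeta)$ constrained by $\sigma_z(\xi)=\sigma_w(\zeta)$, recovering the first summand of (\ref{twistdeltab}). For the second piece one has $F^{-1}(I\otimes e_{z,w})F = \sum_{\eta} e_{\eta,\eta}\otimes {\mathbb V}_\eta^{-1}e_{z,w}{\mathbb V}_\eta$, and evaluating the matrix product forces the nonzero entry to live at $x = \sigma_\eta^{-1}(z)$, $y=\sigma_\eta^{-1}(w)$. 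Invoking the involutivity of $\check r$, which gives the identity $\sigma_\eta^{-1}(z) = \tau_\xi(z)$ whenever $\eta = \sigma_z(\xi)$, re-expresses these inverses in terms of $\tau$ and yields the second summand of (\ref{twistdeltab}).

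The computation for $\Delta(f_{z,w};\lambda_1,\lambda_2)$ follows the same template. The two linear terms $\lambda_1 e_{z,w}\otimes I$ and $\lambda_2 I\otimes e_{z,w}$ are handled as above and produce the first two sums in (\ref{twistdeltab}), weighted by $\lambda_1$ and $\lambda_2$ respectively. The main technical obstacle lies in the quadratic piece $\tfrac12\sum_k(e_{z,k}\otimes e_{k,w}-e_{k,w}\otimes e_{z,k})$: for the first summand one finds $F^{-1}(e_{z,k}\otimes e_{k,w})F = e_{z,k}\otimes {\mathbb V}_z^{-1}e_{k,w}{\mathbb V}_k$, and after re-parameterising $k = \sigma_z(y)$ and collecting the emerging Kronecker deltas one arrives at the constraint $w = \sigma_{\sigma_z(y)}(\hat y)$; the second summand is handled symmetrically, producing the constraint $z = \sigma_{\sigma_w(y)}(\hat y)$. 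Carefully tracking the four independent summation indices through these constraints (and relying again on involutivity to identify $\sigma^{-1}$-values with $\tau$-values) is the most error-prone step, but the work is otherwise routine bookkeeping.

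Finally, the $\mathfrak{gl}_n$-symmetry of $\check R(\lambda)={\cal P}R(\lambda)$ drops out of the first intertwining relation for free: left-multiplying $R(\delta)\Delta(e_{z,w}) = \Delta^{(op)}(e_{z,w}) R(\delta)$ by ${\cal P}$ and using the elementary identity ${\cal P}\Delta^{(op)}(a) = \Delta(a){\cal P}$ gives $\bigl[\check R(\delta),\Delta(e_{z,w})\bigr] = 0$ for all $z,w \in X$, which is precisely the claimed invariance under the twisted $\mathfrak{gl}_n$ coproduct.
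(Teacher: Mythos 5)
Your proposal is correct and follows essentially the same route as the paper: conjugate the Yangian intertwining relations (\ref{basicY}) by the twist $F$ of Proposition \ref{twistlocal} to obtain the intertwining identities for $R(\lambda)=F_{21}^{-1}R_Y(\lambda)F_{12}$, then compute $F^{-1}\Delta_Y(\cdot)F$ explicitly to arrive at (\ref{twistdeltab}); your index bookkeeping (including the use of involutivity to rewrite $\sigma_\eta^{-1}$ in terms of $\tau$) correctly fills in the step the paper compresses to ``explicit computation of the latter leads to (\ref{twistdeltab})''. Your derivation of the $\mathfrak{gl}_n$-invariance of $\check R$ by left-multiplying the intertwining relation by ${\cal P}$ is a slightly more explicit justification than the paper's appeal to $\Delta$ being an algebra homomorphism, but it establishes the same fact.
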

\begin{proof}
The Yangian $R$-matrix satisfies relations (\ref{basicY}), (\ref{coY}), then 
$R(\delta)\Delta(e_{z,w}) = \Delta^{(op)}(e_{z,w})R(\delta)$ and 
$R(\delta)\Delta(f_{z,w}; \lambda_1, \lambda_2) = \Delta^{(op)}(f_{z,w}; \lambda_1, \lambda_2)R(\delta),$ 
where $R(\lambda) = \lambda r +{\cal P}$ and $r$ is the set-theoretic solution 
$r = \sum_{x,y \in X} e_{y \sigma_{x}(y)} \otimes e_{x, \tau_y(x)}$, also
\begin{eqnarray} 
&& \Delta(e_{z,w}) =F^{-1}\big ( e_{z,w}  \otimes I + I \otimes e_{z,w}\big) F, \nonumber\\ 
&& \Delta(f_{z,w}, \lambda_1, \lambda_2) = F^{-1}\big (\lambda_1 e_{z,w} \otimes I + \lambda_2 I \otimes   
e_{z,w} + {1\over 2}\sum_{ y \in X} (e_{z,y} \otimes e_{y,w} - e_{y,w} \otimes e_{z,y} )\big) F, \label{sim1}
\end{eqnarray}
where we recall $F = \sum_{\eta, x \in X} e_{x,x} \otimes {\mathbb V}_x.$  Explicit computation of the latter leads to (\ref{twistdeltab}).

Also, $\Delta$ is an algebra homomorphism,  hence $\Delta(e_{x, y})$ also satisfy the $\mathfrak{gl}_n$ algebra relations, 
thus $\check R = {\cal P} R$  is $\mathfrak{gl}_n$ symmetric ie. $\big [ \check R(\lambda),\ \Delta(e_{x,y}) \big ] =0 $. 
\end{proof}

\subsection*{The Lyubashenko solution} We finish this section with a discussion of 
a special set-theoretic solution known 
as Lyubashenko's solution that supports Conjecture \ref{conj1}.
The analysis for this special class of set-theoretic solutions is compatible with 
Remark \ref{specialcase} and Example \ref{cor1L}.

The Lyubashenko solution is defined as
\begin{equation}
\check r=\sum_{x, y \in X} e_{x, \sigma(y)} \otimes e_{y, \tau(x)}, \label{special1}
\end{equation}
where $\tau,\ \sigma: X \to X$ are bijective functions, 
such that $\sigma(\tau(x)) = \tau(\sigma(x)) = x.$ Let ${\mathbb V} = \sum_{x\in X} e_{x, \tau(x)},$
then as was shown in \cite{DoiSmo2} the special solution 
(\ref{special1}) can be obtained from 
the permutation operator as 
$\check r =  ( I \otimes {\mathbb V}^{-1} ) {\cal P} (I \otimes {\mathbb V}),$ which leads to (\ref{special1}).
Also, $r = {\cal P} \check r$  and  the Baxterized solution $R(\lambda)$ 
have a simple form for this class of solutions:
\begin{equation}
r= {\mathbb V}^{-1} \otimes {\mathbb V}\ \Rightarrow\  R(\lambda) = \lambda {\mathbb V}^{-1} \otimes {\mathbb V} + {\cal P}. \label{special2}
\end{equation}

\noindent Some examples of the above construction are given below:

1. $\sigma(x) =x+1,\ \tau(x) =x-1$,  where addition is modulo $n$ (see also \cite{LAA}).

2. $\sigma(x) =n +1-x,\ \tau(x) =n+1-x$.

In both examples $x \in \{ 1, \ldots, n\}.$\\

In the case of Lyubashenko solution, we consider Example \ref{cor1L} (special case), where  we set ${\cal A}$ to be the Yangian ${\cal Y}(\mathfrak{gl}_n)$, i.e. 
$\big( {\cal Y}(\mathfrak{gl}_n), \Delta, \epsilon, \Phi, {\cal R} \big )$ ($\Phi = 1 \otimes 1 \otimes u^{-1}$) is a quasi-triangular quasi-bialgebra. 
We recall the evaluation representation 
$\pi_{\lambda}: {\cal Y}(\mathfrak{gl}_n) \to \mbox{End}({\mathbb C}^n),$ such that $ u \mapsto {\mathbb V},$  
${\mathrm Q}_{xy}^{(1)} \mapsto e_{x,y},$  ${\mathrm Q}_{xy}^{(2)} \mapsto \lambda e_{x,y}$ etc.
Then the twist becomes $F = I \otimes {\mathbb V}$ and the twisted coproducts are
\begin{eqnarray} 
&& \Delta(e_{z,w}) =  e_{z,w} \otimes  I + I \otimes   e_{\tau(z) ,\tau(w)},  \nonumber \\
&&   \Delta(f_{x,y}; \lambda_1, \lambda_2) =  \lambda_1e_{z,w} \otimes  I +\lambda_2 I \otimes   e_{\tau(z) ,\tau(w)} \ + {1\over 2}\sum_{y \in X} 
\big (e_{z, \sigma(y)} \otimes e_{y,\tau(w)} - e_{\sigma(y), w} \otimes
e_{\tau(z), y} \big ).  \nonumber
\end{eqnarray}

Recall that in this case $F= I \otimes {\mathbb V}$, and $\Phi = I \otimes I \otimes {\mathbb V}^{-1},$ 
then we have:
\begin{eqnarray}
&& F_{1,23} := (\mbox{id} \otimes \Delta)F = I \otimes {\mathbb V} \otimes {\mathbb V} \nonumber \\
&& F_{12,3}^* := \big( (\Delta \otimes \mbox{id})F \big )\Phi^{-1}= I \otimes I \otimes {\mathbb V}^2,\nonumber
\end{eqnarray}
in accordance to the Conjecture \ref{conj1}.
Also, the $N$-fold twist is $ F_{12...N} = I \otimes {\mathbb V} \otimes {\mathbb V}^{2}\otimes \ldots \otimes {\mathbb V}^{(N-1)}$  (see also relevant findings in \cite{Doikoutw}).

\section{Quasi-bialgebras from ${\mathfrak U}_q(\widehat{\mathfrak{gl}_n})$}

\noindent We will discuss in this section the $q$-generalizations of  set-theoretic solutions.  
Although the situations we are going to address here strictly speaking are not
set-theoretic solutions of the Yang-Baxter equation 
they are certainly inspired by the results of the preceding section.  After a brief review on ${\mathfrak U}_q(\widehat{\mathfrak{gl}_n})$ we will consider the $q$-deformed analogues of the set-theoretic solutions via the twists discussed in the previous section and in \cite{Doikoutw}, 
subject to certain extra constraints. The findings of this section greatly generalize the preliminary results of \cite{DoiSmo2}.

\subsection{The algebra ${\mathfrak U}_q(\widehat{\mathfrak{gl}_n})$}

\noindent It will be useful in what follows to recall the basic definitions regarding the algebra
${\mathfrak U}_q(\widehat{\mathfrak{gl}_n})$ \cite{Drinfeld, Jimbo, Jimbo2}.
Let \be a_{ij} = 2 \delta_{ij} - (\delta_{i\ j+1}+ \delta_{i\ j-1} +\delta_{i1}\ \delta_{jn}+\delta_{in}\ \delta_{j1}), ~~i,\ j\in \{1, \ldots , n \} \ee 
 be the Cartan matrix of the affine Lie algebra
${\widehat{\mathfrak{sl}_n}}$\footnote{For the $\widehat{\mathfrak{sl}_{2}}$ case in particular \be a_{ij} =2\delta_{ij} -2 (\delta_{i1}\ \delta_{j2} 
+\delta_{i2}\ \delta_{j1}), ~~i,\ j \in \{ 1, 2\}\ee}. Also define: 
\begin{eqnarray} && [m]_{q} ={q^{m} -q^{-m} 
\over q -q^{-1}}, ~~~[m]_{q}!= \prod_{k=1}^{m}\ [k]_{q},~~~[0]_{q}! =1 \non\\  
&&\left [ \begin{array}{c}
m \\
n \\ \end{array} \right  ]_{q} = {[m]_{q}! \over [n]_{q}!\ [m-n]_{q}!}, ~~~m>n>0. 
 \end{eqnarray}
\begin{defn}\label{defq} {\it The quantum affine enveloping
algebra ${\mathfrak U}_q(\widehat{\mathfrak{sl}_n})$ has the
Chevalley-Serre generators}  $e_{i}$, $f_{i}$,
$q^{\pm {h_{i}\over 2}}$, $i\in \{1, \ldots, n\}$ {\it obeying the defining relations:} 
\begin{eqnarray}
&& \Big [q^{\pm {h_{i}\over 2}},\ q^{\pm {h_{j}\over 2}} \Big]=0\, \qquad q^{{h_{i}\over 2}}\ e_{j}=q^{{1\over
2}a_{ij}}e_{j}\ q^{{h_{i}\over 2}}\, \qquad q^{{h_{i}\over 2}}\ f_{j}
= q^{-{1\over 2}a_{ij}}f_{j}\ q^{{h_{i}\over 2}}, \non\\
&& \Big [e_{i},\ f_{j}\Big ] = \delta_{ij}{q^{h_{i}}-q^{-h_{i}} \over q-q^{-1}},
~~~~i,j \in \{ 1, \ldots,n \}
\label{1} 
\end{eqnarray}
{\it and the $q$ deformed Serre relations 
\begin{eqnarray} 
&& \sum_{n=0}^{1-a_{ij}} (-1)^{n}
\left [ \begin{array}{c}
  1-a_{ij} \\
   n \\ \end{array} \right  ]_{q} 
\chi_{i}^{1-a_{ij}-n}\ \chi_{j}\ \chi_{i}^{n} =0, ~~~\chi_{i} \in \{e_{i},\ f_{i} \}, ~~~ i \neq j. \label{chev} 
\end{eqnarray}}
\end{defn}

\begin{rem} \label{rem3q}   The generators $e_{i}$, $f_{i}$, $q^{\pm h_{i}}$ for $i\in \{1, \ldots, n\}$ form the 
${\mathfrak U}_q(\widehat{\mathfrak{sl}_n})$ algebra. 
Also, $q^{\pm h_{i}}=q^{\pm (\varepsilon_{i} -\varepsilon_{i+1})}$,  $i\in \{1, \ldots, n-1\}$ and 
$q^{\pm h_{n}}=q^{\pm (\varepsilon_{n} -\varepsilon_{1})},$ where the elements
$q^{\pm \varepsilon_{i}}$ belong to ${\mathfrak U}_q(\widehat{{\mathfrak{gl}}_n}) $. Recall that ${\mathfrak U}_q(\widehat{{\mathfrak{gl}_n}}) $ is obtained by adding to 
${\mathfrak U}_q(\widehat{{\mathfrak{sl}_n}}) $ the elements $q^{\pm \varepsilon_{i}}$ $i\in \{1, \ldots, n\}$  so that $q^{\sum_{i=1}^{n}\varepsilon_{i}}$ 
belongs to the center (for more details see \cite{Jimbo}).

\end{rem}

We also note that $\big ( {\mathfrak U}_q(\widehat{\mathfrak{gl}_n}) , \Delta_q, \epsilon, S_q, {\cal R}_{q}\big )$ 
is a quasi-triangular Hopf algebra over ${\mathbb C}$  equipped with \cite{Drinfeld}:
\begin{itemize}
\item A coproduct $\Delta_q:\  {\mathfrak U}_q(\widehat{\mathfrak{gl}_n})
\to {\mathfrak U}_q(\widehat{\mathfrak{gl}_n}) \otimes {\mathfrak U}_q(\widehat{\mathfrak{gl}_n})$ such that
\begin{eqnarray}
 && \Delta_q(\xi_i) = q^{- {h_{i} \over 2}} \otimes \xi_i + \xi _i\otimes q^{{h_{i} \over 2}}, ~~\xi_i \in \Big \{e_{i},\ f_{i}\Big  \} \label{copba} \\
&&\Delta_q(q^{\pm{\varepsilon_{i} \over 2}}) = q^{\pm{\varepsilon_{i} \over 2}} \otimes
q^{\pm{\varepsilon_{i} \over 2}}, \quad i \in \{1, \ldots,n \} .\label{copbb} 
\end{eqnarray} 
The $l$-fold co-product 
$\Delta_q^{(l)}:\ {\mathfrak U}_q(\widehat{\mathfrak{gl}_n})
\to {\mathfrak U}_q(\widehat{\mathfrak{gl}_n})^{\otimes (l)}$ is defined as 
$\Delta_q^{(l)} = (\mbox{id} \otimes \Delta_q^{(l-1)})\Delta_q =(\Delta_q^{(l-1)} \otimes \mbox{id})
\Delta_q.$

\item A co-unit $\epsilon: {\mathfrak U}_q(\widehat{\mathfrak{gl}_n}) \to {\mathbb C}$ such that
\begin{equation}
\epsilon(e_j) = \epsilon(f_j) =0, ~~~~\epsilon(q^{\varepsilon_j}) =1.
\end{equation}

\item An antipode $S_q:  {\mathfrak U}_q(\widehat{\mathfrak{gl}_n}) \to  {\mathfrak U}_q(\widehat{\mathfrak{gl}_n})$ such that
\begin{equation}
S_q(q^{\varepsilon_i}) = q^{-\varepsilon_i} , ~~~S_q(\xi_i) =- q^{h_i\over 2} \xi_i q^{-h_i\over 2}.
\end{equation}

\item There exists an invertible element ${\cal R}_q \in {\mathfrak U}_q(\widehat{\mathfrak{gl}_n}) \otimes {\mathfrak U}_q(\widehat{\mathfrak{gl}_n}),$ such that it satisfies the axioms of Definition \ref{definition2} for 
$\Phi =1\otimes 1 \otimes 1.$

\end{itemize}

We recall now Example \ref{cor1L} and we briefly discuss the notion of the antipode for ${\mathfrak U}_q(\widehat{\mathfrak{gl}_n}).$  We fist introduce a shorthand notation for the unit element of ${\mathfrak U}_q(\widehat{\mathfrak{gl}_n})$: $1_q := 1_{{\mathfrak U}_q(\widehat{\mathfrak{gl}_n})}.$

\begin{rem}{\label{remanti2}}
We specialize Example \ref{cor1L} to the case of ${\mathfrak U}_q(\widehat{\mathfrak{gl}_n}),$  i.e. 
$\big({\mathfrak U}_q(\widehat{\mathfrak{gl}_n}), \Delta, \epsilon, \Phi, {\cal R} \big ),$ where $\Phi = 1 \otimes 1 \otimes u^{-1}$ for some group-like element $u$,
is a quasi-triangular quasi-bialgebra.  We now want to test whether 
$\big ({\mathfrak U}_q(\widehat{\mathfrak{gl}_n}),  \Delta, \epsilon,\Phi,  {\cal R}\big)$ is a quasi-Hopf algebra,  
i.e.  if there exist $S$, $\alpha$, $\beta$ such that axioms (1) and (2) in Definition \ref{definition1b} hold.
Suppose that the group-like element $u$ exists,  then $\Delta(u) = u \otimes u$ and:
\begin{enumerate}
\item $S(u)\alpha u =\alpha$ and $u\beta S(u) =\beta,$ which lead to $S(u) = \alpha u^{-1} \alpha^{-1} = \beta^{-1} u^{-1}\beta.$
\item $\beta \alpha u^{-1} =1_{q}$ and $\alpha \beta S(u) = 1_q,$ which lead to $u = \beta \alpha$ and $S(u) = \beta^{-1} \alpha^{-1}.$
\end{enumerate} 
All the above equations are self-consistent. Similarly, we can show that  $S(u^{-1}) = \alpha \beta$

We now check the axioms of Definition \ref{definition1b} for the elements $e_j,\ f_j,\ q^{{\varepsilon_j}},$  with coproducts, after twisting, given as
$\Delta(\xi_j) = q^{-{h_j \over 2}} \otimes u^{-1}\xi_ju + \xi_j\otimes u^{-1} q^{{h_j \over 2}} u,$ $~\xi_j\in \{e_j,\ f_j\}$ and 
$\Delta(q^{\varepsilon_j}) = q^{\varepsilon_j} \otimes u^{-1} q^{\varepsilon_j} u$:
\begin{enumerate}
\item We consider $S(q^{\varepsilon_j}) \alpha u^{-1}q^{\varepsilon_j}u=\alpha$ and  $q^{\varepsilon_j} \beta S(u^{-1}q^{\varepsilon_j}u) = \beta,$ 
which lead to (by requiring that $S$ is an algebra anti-homomorphism): $ S(q^{\varepsilon_j}) =
\alpha q^{-\varepsilon_j}\alpha^{-1}  = \beta^{-1}q^{-\varepsilon_j}\beta.$
The latter expression leads to $\big [ q^{\varepsilon_j},\ u\big ]=0.$ Similar conclusions hold  for $S(\xi_j).$ Indeed, from axiom (1) of Definition \ref{definition1b} we conclude that $S(\xi_j) = -\alpha q^{h_j \over 2}\xi_j \ q^{-{h_j \over 2}}\alpha^{-1}=-\beta^{-1} q^{h_j \over 2}\xi_j \ q^{-{h_j \over 2}}\beta$, which leads to $\big [ \xi_j,\ u\big] = 0.$
\end{enumerate}

\noindent As in the case of the Yangian studied in the previous section, the axioms for the antipode restrict $u$ to be in the center of the algebra, which in general is not true. Indeed,  simple examples of such group-like elements that are not central are given by $u = q^{\pm \varepsilon_j}$. Hence, we can not consistently define $S(w), \ w \in {\mathfrak U}_q(\widehat{\mathfrak{gl}_n})$ for a generic $u$
by strictly following the axioms of Definition \ref{definition1b}.
\end{rem}

It will be useful for the findings of the next subsection to recall the evaluation representation of 
${\mathfrak U}_q\widehat{(\mathfrak{gl}_n})$ \cite{Jimbo} (homogeneous gradation),
$\pi_{\lambda}: {\mathfrak U}_q(\widehat{\mathfrak{gl}_n})\to \mbox{End}({\mathbb C}^n),$ $\lambda \in {\mathbb C}$:
\begin{eqnarray} 
&&\pi_{\lambda}(e_{i})= e_{i, i+1}, ~~~\pi_{\lambda}(f_{i})=e_{i+1, i}, ~~~\pi_{\lambda}(q^{{\varepsilon_{i} \over 2}}) = 
q^{{ e_{i,i} \over 2}},~~ i \in \{1,\ldots, n-1\} \cr
&& \pi_{\lambda}(e_n) = e^{-2\lambda} e_{n,1},  ~~~ \pi_{\lambda}(f_n) = e^{2\lambda} e_{1,n},  ~~~ ~
\pi_{\lambda}(q^{{h_{n} \over 2}}) = e^{{e_{n,n} - e_{1,1} \over 2}}\label{eval} 
\end{eqnarray}
and we also introduce some useful notation:
\begin{eqnarray}
&& (\pi_{\lambda_1} \otimes \pi_{\lambda_2})\Delta_q(e_j) = \Delta_q(e_{j, j+1}),  ~~ (\pi_{\lambda_1} 
\otimes \pi_{\lambda_2})\Delta_q(f_j) = \Delta_q(e_{ j+1, j}), \cr && j\in \{1,\ldots, n-1\},  \cr
&&  (\pi_{\lambda_1} \otimes \pi_{\lambda_2})\Delta_q(e_n) = \Delta_q(e_{n,1}; \lambda_1, \lambda_2),  
~~ (\pi_{\lambda_1} \otimes \pi_{\lambda_2})\Delta_q(f_n) = \Delta_q(e_{ 1,n};\lambda_1, \lambda_2)\cr
&& (\pi_{\lambda_1} \otimes \pi_{\lambda_1})\Delta_q(q^{\epsilon_j}) = \Delta_q(q^{e_{j, j}}), ~ j\in \{1,\ldots, n\}. \label{notation2q}
\end{eqnarray}

We recall  also the ${\mathfrak U}_q(\mathfrak{gl}_n)$-invariant representation of the $A$-type Hecke algebra \cite{Jimbo2}:
\begin{equation}
{\mathrm g} =\sum_{x \neq y\in X} \Big ( e_{x, y} \otimes e_{y,x}  - q^{-sgn(x-y)} e_{x,x} \otimes e_{y,y} \Big ) +q I. \label{braidq}
\end{equation}
Indeed, the above element  satisfies the braid relation $(I\otimes g)(g\otimes I)(I\otimes g) =(g\otimes I)(I\otimes g) (g\otimes I) $ 
as well as the Hecke constraint $(g -q)(g + q^{-1}) =0.$
The Baxterized ${\mathfrak U}_q(\widehat{\mathfrak{gl}_n})$ solution of the Yang Baxter equation is the
$R_q(\lambda) = e^{\lambda} {\mathrm g}^+ - e^{-\lambda} {\mathrm g}^{-}, $ where ${\mathrm g}^+ = {\cal P} {\mathrm g},$
${\mathrm g}^- = {\cal P} {\mathrm g}^{-1}$ (${\cal P}$ the permutation operator).  We also define 
$\Delta_q^{(op)}(w;\lambda_1, \lambda_2) := {\cal P} \Delta_q(w;\lambda_2, \lambda_1) {\cal P},$ the 
${\mathfrak U}_q(\widehat{\mathfrak{gl}_n})$ 
$R_q$-matrix satisfies the intertwining relations:
\begin{eqnarray}
&& R_q(\lambda_1-\lambda_2)\Delta_q(\zeta) = \Delta_q^{(op)}(\zeta) R_q(\lambda_1 -\lambda_2)~~~~\zeta \in 
\{ e_{j,j+1}, e_{j+1,j}, q^{e_{j,j}}, q^{e_{n,n}}\},\cr && j\in \{1, \ldots, n-1 \} \cr
&& R_q(\lambda_1-\lambda_2)\Delta_q(\zeta_n;\lambda_1, \lambda_2) = \Delta_q^{(op)}(\zeta_n; \lambda_1, \lambda_2)
 R_q(\lambda_1 -\lambda_2), ~~~\zeta_n \in \{e_{n,1}, e_{1,n,}\}.\label{interbq}
\end{eqnarray}

This brief review on ${\mathfrak U}_q(\widehat{\mathfrak{gl}_n})$  will be particularly 
useful for the findings of the subsequent subsection.

\subsection{The $q$-analogues of set-theoretic solutions of the YBE $\&$ quasi-bialgebras}

\noindent  Inspired by the set-theoretic solutions and the associated twists \cite{Doikoutw},  as discussed in the previous section, we generalize 
in what follows results regarding the twist of the ${\mathfrak U}(\widehat{\mathfrak{gl}_n})$ $R$-matrix.
Note that strictly speaking this solution is not a set-theoretic solution of the braid equation. Nevertheless, 
the admissible twists found for the set-theoretic solutions can be still exploited to yield generalized solutions
based on (\ref{braidq}).

We state below a basic Lemma that will lead to the main Proposition of this section associated to admissible twists of 
the ${\mathfrak U}_q(\widehat{\mathfrak{gl}_n})$ $R$-matrix. This construction provides  the $q$-analogue of the 
$R$-matrices coming from set-theoretic solutions of the YBE and greatly generalizes the preliminary results of \cite{DoiSmo2}.
\begin{lemma}{\label{lemq1}}
Let ${\mathbb V}_{\eta} = \sum_{x\in X}e_{\sigma_{\eta}(x), x},$ $\forall \eta \in X$ consist a family of group-like elements, i.e. 
$\Delta_{q}({\mathbb V}_{\eta}) = {\mathbb V}_{\eta} \otimes {\mathbb V}_{\eta},$  and let ${\mathrm g}$ be the ${\mathfrak U}({\mathfrak{gl}_n})$-invariant 
element (\ref{braidq}). Then $\big [{\mathrm g},\ \Delta_q({\mathbb V}_{\eta}) \big ] =0,$ subject to the constraint 
$sgn (x-y) = sgn \big (\sigma_{\eta} (x) - \sigma_{\eta}(y)\big ),$ $\forall \eta, x, y \in X.$
\end{lemma}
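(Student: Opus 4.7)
The plan is to reduce the commutation relation to a conjugation-invariance statement for $\mathrm{g}$ and then analyse the action term by term. Since $\Delta_q({\mathbb V}_\eta) = {\mathbb V}_\eta \otimes {\mathbb V}_\eta$ is assumed to hold, the assertion $[{\mathrm g}, \Delta_q({\mathbb V}_\eta)] = 0$ is equivalent to $({\mathbb V}_\eta \otimes {\mathbb V}_\eta)^{-1}\, {\mathrm g}\, ({\mathbb V}_\eta \otimes {\mathbb V}_\eta) = {\mathrm g}$. Because $\sigma_\eta$ is a bijection on $X$, ${\mathbb V}_\eta = \sum_x e_{\sigma_\eta(x),x}$ is a permutation matrix with ${\mathbb V}_\eta^{-1}=\sum_x e_{x,\sigma_\eta(x)}$, and a direct computation yields the key identity ${\mathbb V}_\eta\, e_{x,y}\, {\mathbb V}_\eta^{-1} = e_{\sigma_\eta(x),\,\sigma_\eta(y)}$, which I will use throughout.

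First I would split $\mathrm{g}$ into its three natural pieces, namely the off-diagonal piece $\mathrm{g}^{(1)}:=\sum_{x\neq y} e_{x,y}\otimes e_{y,x}$, the diagonal piece $\mathrm{g}^{(2)}:=-\sum_{x\neq y} q^{-\mathrm{sgn}(x-y)} e_{x,x}\otimes e_{y,y}$, and the scalar $qI$. The scalar piece obviously commutes with ${\mathbb V}_\eta\otimes {\mathbb V}_\eta$. For $\mathrm{g}^{(1)}$, applying the identity above to each factor gives $({\mathbb V}_\eta\otimes {\mathbb V}_\eta)\,\mathrm{g}^{(1)}\,({\mathbb V}_\eta\otimes {\mathbb V}_\eta)^{-1} = \sum_{x\neq y} e_{\sigma_\eta(x),\sigma_\eta(y)}\otimes e_{\sigma_\eta(y),\sigma_\eta(x)}$; relabelling $x'=\sigma_\eta(x)$, $y'=\sigma_\eta(y)$, which is a bijection on $X\times X$ preserving the condition $x'\neq y'$, returns $\mathrm{g}^{(1)}$, so this piece is automatically invariant with no extra hypothesis.

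The decisive step is $\mathrm{g}^{(2)}$. The same conjugation and relabelling procedure gives
\begin{equation}
({\mathbb V}_\eta\otimes {\mathbb V}_\eta)\,\mathrm{g}^{(2)}\,({\mathbb V}_\eta\otimes {\mathbb V}_\eta)^{-1} = -\sum_{x'\neq y'} q^{-\mathrm{sgn}(\sigma_\eta^{-1}(x')-\sigma_\eta^{-1}(y'))}\, e_{x',x'}\otimes e_{y',y'}. \nonumber
\end{equation}
Invariance amounts to $\mathrm{sgn}(\sigma_\eta^{-1}(x')-\sigma_\eta^{-1}(y')) = \mathrm{sgn}(x'-y')$ for all $x'\neq y'$, which, since $\sigma_\eta$ is a bijection of the linearly ordered set $X$, is equivalent to the stated condition $\mathrm{sgn}(x-y)=\mathrm{sgn}(\sigma_\eta(x)-\sigma_\eta(y))$ for all $\eta,x,y\in X$. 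Assembling the three pieces, the commutator $[\mathrm{g},\Delta_q({\mathbb V}_\eta)]$ vanishes precisely under this order-preservation constraint.

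No part of the argument presents a real obstacle; the only thing to be careful about is handling the $q^{-\mathrm{sgn}(x-y)}$ factors correctly under the index substitution, since it is the single source of the nontrivial condition. If preferred, one can organise the computation by first checking the two sign regions $x<y$ and $x>y$ separately and observing that $\sigma_\eta$ maps one into the other if and only if it reverses order there, violating the invariance unless the stated hypothesis holds.
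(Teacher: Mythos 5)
Your proof is correct and follows essentially the same route as the paper: the paper likewise splits ${\mathrm g}$ into a permutation part (which it packages as ${\cal P}$, observing it commutes with the group-like tensor square), the diagonal part $\sum_{x,y} q^{-sgn(x-y)} e_{x,x}\otimes e_{y,y}$, and the scalar $qI$, and extracts the order-preservation constraint from comparing the $q^{-sgn}$ exponents on the diagonal term exactly as you do. Your relabelling argument for the off-diagonal piece is just the explicit form of the observation that $\Delta_q({\mathbb V}_\eta)$ commutes with ${\cal P}$, so there is nothing substantively different here.
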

\begin{proof}
It is convenient to re-express the element ${\mathrm g}$ as
\begin{equation}
{\mathrm g} = {\cal P} - \sum_{x, y \in X} q^{-sgn(x-y)} e_{x,x} \otimes e_{y,y} + q I, \label{rebraid}
\end{equation}
where ${\cal P}$ is the permutation operator and $I$ is the $n \times n$ identity matrix. It is obvious that 
$\Delta_q({\mathbb V}_{\eta})$ 
commutes with ${\cal P}$, so it suffices to show that 
$\Delta_q({\mathbb V}_{\eta})$ commutes with the second term of (\ref{rebraid}), indeed we compute
\begin{equation}
\big (\sum_{x, y \in X} q^{-sgn(x-y)} e_{x,x} \otimes e_{y,y}\big ) \big ( \sum_{\xi,\zeta \in X} e_{\sigma_{\eta}(\xi), \xi} \otimes e_{\sigma_{\eta}(\zeta), \zeta}\big ) =
 \sum_{\zeta, \xi \in X} q^{-sgn \big(\sigma_{\eta}(\xi)-\sigma_{\eta}(\zeta) \big )}  e_{\sigma_{\eta}(\xi), \xi} \otimes  e_{\sigma_{\eta}(\zeta), \zeta} \label{expa}
\end{equation}
\begin{equation}
 \big ( \sum_{\xi,\zeta \in X} e_{\sigma_{\eta}(\xi), \xi} \otimes e_{\sigma_{\eta}(\zeta), \zeta}\big )\big 
(\sum_{x, y \in X} q^{-sgn(x-y )} e_{x,x} \otimes e_{y,y}\big ) =\sum_{\zeta, \xi \in X} q^{-sgn (\xi-\zeta )}  
e_{\sigma_{\eta}(\xi), \xi} \otimes  e_{\sigma_{\eta}(\zeta), \zeta}.  \label{expb}
\end{equation}
Requiring expressions (\ref{expa}) and (\ref{expb}) to be equal we conclude that $[g,\Delta_q(V_\eta)]=0, $ 
$\forall \eta \in X$, if and only if $\sigma_\eta$ is an order-preserving permutation of $X$, i.e.  $sgn(x-y) = 
sgn \big (\sigma_{\eta} (x) - \sigma_{\eta}(y)\big ),$ $\forall \eta, x, y \in X.$ 
\end{proof}

If $X$ is finite then the identity map is the only order-preserving permutation.
It is important however to note that the condition $sgn(x-y) = 
sgn \big (\sigma_{\eta} (x) - \sigma_{\eta}(y)\big )$ possibly holds when considering countably infinite sets.  A characteristic example is demonstrated at the end of the section, where the $q$-analogue of the Lyubaschenko solution is presented.  A further detailed analysis of this construction and possible generalizations will be 
presented in future works.

We come now to the main proposition of this section.
\begin{pro} Let ${\mathrm g}$ be the ${\mathfrak U}({\mathfrak{gl}_n})$-invariant 
element (\ref{braidq}) and  $ F = \sum_{\eta \in X} e_{\eta, \eta} \otimes {\mathbb V}_{\eta}$ (${\mathbb V}_{\eta} = \sum_{x\in X}e_{\sigma_{\eta}(x), x}$)
be the set-theoretic twist. Let also $F_{1,23}$ and $F_{12,3}^*$ be the quantities defined in (\ref{cop1}), (\ref{cop2a}). 
We also define ${\mathrm G} = F^{-1}{\mathrm g} F,$ then provided that $C_1 =0,$ $C=0$ 
($C_1 = \sigma_{\eta}(\sigma_x(y)) -\sigma_{\sigma_{\eta}(x)}(\sigma_{\tau_x(\eta)}(y)),$ 
$C= sgn(x-y) - sgn\big (\sigma_{\eta}(x) - \sigma_{\eta}(y)\big )$):
\begin{enumerate}
\item $\big [ {\mathrm G}_{12}, F_{12,3}^* \big ] =\big [ {\mathrm G}_{23}, F_{1,23} \big ] = 0.$
\item The twisted element ${\mathrm G}$  satisfies the braid relation and the Hecke constraint (i.e.  it  provides a representation of the $A$-type Hecke algebra).
\end{enumerate}
\end{pro}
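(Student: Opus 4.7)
The strategy is to split $\mathrm{g}$ into a part that is already understood from the set-theoretic framework plus a diagonal correction. Using (\ref{rebraid}), write $\mathrm{g} = {\cal P} - D + q I$ with $D = \sum_{x,y \in X} q^{-sgn(x-y)} e_{x,x} \otimes e_{y,y}$ diagonal in the standard basis. By Proposition \ref{twistlocal}, $F^{-1}{\cal P} F = \check r$. Moreover $F = \sum_{\eta \in X} e_{\eta,\eta} \otimes \mathbb{V}_{\eta}$ is itself a permutation matrix on ${\mathbb C}^n \otimes {\mathbb C}^n$, so conjugation by $F$ preserves diagonality; a direct calculation gives $F^{-1}DF = \sum_{\eta, z \in X} q^{-sgn(\eta - \sigma_{\eta}(z))} e_{\eta,\eta} \otimes e_{z,z}$. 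Hence $\mathrm{G} = \check r + q I - F^{-1}DF$ splits as the set-theoretic $\check r$ plus a diagonal operator.

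For part (1), the commutator $[\mathrm{G}_{12}, F^*_{12,3}]$ splits accordingly: $[\check r_{12}, F^*_{12,3}] = 0$ by (\ref{comut1}); and since the first two tensor factors of $F^*_{12,3}$ in (\ref{cop2a}) are of the form $e_{a,a} \otimes e_{b,b}$, any operator diagonal in factors $1,2$ commutes trivially with $F^*_{12,3}$, dispatching the diagonal contribution. For $[\mathrm{G}_{23}, F_{1,23}]$, Corollary \ref{rem5} gives $F_{1,23} = \sum_{\eta \in X} e_{\eta,\eta} \otimes \Delta(\mathbb{V}_{\eta})$, and since $\mathrm{G}_{23}$ acts trivially on factor $1$ the commutator vanishes iff $[\mathrm{G}, \Delta(\mathbb{V}_{\eta})] = 0$ in $\mbox{End}({\mathbb C}^n \otimes {\mathbb C}^n)$. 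The $\check r$-contribution vanishes by Proposition \ref{basicl}: combining its intertwining relation with $\Delta^{(op)}(\mathbb{V}_{\eta}) = {\cal P}\Delta(\mathbb{V}_{\eta}){\cal P}$ yields $\check R(\lambda)\Delta(\mathbb{V}_{\eta}) = \Delta(\mathbb{V}_{\eta})\check R(\lambda)$, whence $[\check r, \Delta(\mathbb{V}_{\eta})] = 0$. For the diagonal piece $F^{-1}DF$, commutation with the off-diagonal operator $\Delta(\mathbb{V}_{\eta})$ from (\ref{twistdelta}) reduces to equality of the diagonal entries of $F^{-1}DF$ at positions $(\sigma_{\eta}(x), \sigma_{\tau_x(\eta)}(y))$ and $(x,y)$, i.e.\ to $sgn(\sigma_{\eta}(x) - \sigma_{\sigma_{\eta}(x)}(\sigma_{\tau_x(\eta)}(y))) = sgn(x - \sigma_x(y))$. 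Here $C_1 = 0$ collapses the inner term $\sigma_{\sigma_{\eta}(x)}(\sigma_{\tau_x(\eta)}(y))$ to $\sigma_{\eta}(\sigma_x(y))$, and then $C = 0$ (the order-preserving property of $\sigma_{\eta}$) identifies $sgn(\sigma_{\eta}(x) - \sigma_{\eta}(\sigma_x(y)))$ with $sgn(x - \sigma_x(y))$. This is the one place where both constraints are essential.

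Part (2) is a formal consequence. The Hecke relation is preserved under conjugation: $(\mathrm{G} - q)(\mathrm{G} + q^{-1}) = F^{-1}(\mathrm{g} - q)(\mathrm{g} + q^{-1})F = 0$. For the braid relation, Proposition \ref{cocycle} supplies the cocycle identity $F_{12} F^*_{12,3} = F_{23} F_{1,23} =: F_{123}$. Substituting $F_{12} = F_{123}(F^*_{12,3})^{-1}$ into $\mathrm{G}_{12} = F_{12}^{-1}\mathrm{g}_{12}F_{12}$ gives $\mathrm{G}_{12} = F^*_{12,3} F_{123}^{-1}\mathrm{g}_{12} F_{123} (F^*_{12,3})^{-1}$, and invoking $[\mathrm{G}_{12}, F^*_{12,3}] = 0$ from part (1) collapses this to $\mathrm{G}_{12} = F_{123}^{-1}\mathrm{g}_{12} F_{123}$; the analogous argument using $[\mathrm{G}_{23}, F_{1,23}] = 0$ yields $\mathrm{G}_{23} = F_{123}^{-1}\mathrm{g}_{23} F_{123}$. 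Thus $\mathrm{G}_{12}$ and $\mathrm{G}_{23}$ are simultaneous conjugates of $\mathrm{g}_{12}$ and $\mathrm{g}_{23}$ by the single element $F_{123}$, and the braid relation $\mathrm{G}_{12}\mathrm{G}_{23}\mathrm{G}_{12} = \mathrm{G}_{23}\mathrm{G}_{12}\mathrm{G}_{23}$ transfers at once from $\mathrm{g}$ to $\mathrm{G}$. The chief technical obstacle is the sign-matching step in part (1); everything else is formal manipulation.
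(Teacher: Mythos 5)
Your proof is correct and follows essentially the same route as the paper: part (1) rests on the set-theoretic commutation relations (\ref{comut1}) and on $\big[{\mathrm G},\Delta({\mathbb V}_{\eta})\big]=0$ under the constraints $C_1=C=0$, and part (2) follows from the cocycle identity of Proposition \ref{cocycle}. The differences are only organizational: you carry out the sign-matching in the twisted frame via the decomposition of ${\mathrm G}$ into $\check r$ plus a diagonal correction (where the paper conjugates Lemma \ref{lemq1} wholesale and computes ${\mathrm G}_{12}F^*_{12,3}$ directly), and you make explicit the simultaneous conjugation of ${\mathrm g}_{12},{\mathrm g}_{23}$ by the single element $F_{123}$ that the paper invokes in one line to transfer the braid relation.
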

\begin{proof}
We showed in Lemma \ref{lemq1} that $\big [{\mathrm g},\ \Delta_q({\mathbb V}_{\eta}) \big ] =0,$ subject to the constraint $C=0.$ This leads to
$\big [{\mathrm G},\ \Delta({\mathbb V}_{\eta}) \big ] =0,$ $\forall \eta \in X,$ where ${\mathrm G} = F^{-1}{\mathrm g} F$ 
and $\Delta({\mathbb V}_{\eta}) = F^{-1} \Delta_q({\mathbb V}_{\eta})F,$ where $\Delta_q({\mathbb V}_{\eta})$ is derived subject to $C_1=0$ (see Lemma \ref{lemq1}).

\begin{enumerate}
\item We recall that $F_{1,23} = (\mbox{id} \otimes \Delta) F,$ i.e. $F_{1,23} =\sum_{\eta \in X} e_{\eta, \eta} \otimes \Delta({\mathbb V}_{\eta}),$ 
then using $\big [{\mathrm G},\ \Delta({\mathbb V}_{\eta})\big] =0$ we obtain, $\big [ {\mathrm G}_{23}, F_{1,23} \big ] = 0,$ subject to $C_1 =C=0.$ 

We show now that $\big [ {\mathrm G}_{12}, F_{12,3}^* \big ] =0,$ to achieve this we explicitly derive the form of ${\mathrm G}:$
\begin{eqnarray}
&& {\mathrm G} = F^{-1} {\mathrm g} F = \check r - \sum_{x,y\in X} q^{-sgn\big (x - \sigma_x(y)\big )} e_{x,x} \otimes e_{y,y} + qI,
\end{eqnarray}
where $\check r = \sum_{x,y \in X} e_{x, \sigma_x(y)} \otimes e_{y, \tau_y(x)}.$
We recall also the explicit expression $F^*_{12,3} = \sum_{\eta, x, y\in X} e_{\eta, \eta} \otimes e_{x, x} 
\otimes e_{\sigma_{\eta}(\sigma_x(y))}|_{C_1=0}$ and we  compute:
\begin{eqnarray}
&& ~{\mathrm G}_{12} F_{12,3}^* = F_{12,3}^*{\mathrm G}_{12}  =\\
&&  \sum_{x,y, z \in X} \big( e_{x, \sigma_x(y)} \otimes e_{y, \tau_y(x)}   - q^{-sgn\big (x- \sigma_{x}(y)\big)} e_{x,x} 
\otimes e_{y,y}\big ) \otimes e_{\sigma_x(\sigma_y(z)),z}  + qF^*_{12,3}. \nonumber
\end{eqnarray}
\item Due to the fact that: (1) $F$ is an admissible Drinfeld twist,  (2) $\big [ {\mathrm G}_{12}, F_{12,3}^* \big ] =
\big [ {\mathrm G}_{23}, F_{1,23} \big ] = 0,$ and (3) ${\mathrm g}$ satisfies the braid relation, we conclude that 
${\mathrm G}$ also satisfies the braid equation. Similarly,
 $({\mathrm g} - q) ({\mathrm g}+ q^{-1}) = 0 \Rightarrow ({\mathrm G} - q) ({\mathrm G}+ q^{-1}) = 0.$
\hfill \qedhere
\end{enumerate}
\end{proof}

\begin{rem}{\label{remq1}}
The Baxterized solution 
$\check R(\lambda ) = F^{-1} \check R_q(\lambda) F = e^{\lambda} {\mathrm G} - e^{-\lambda } {\mathrm G}^{-1}$  
(recall $\check R_q(\lambda)= e^{\lambda} {\mathrm g} - e^{-\lambda } {\mathrm g}^{-1}$), and hence 
$R(\lambda)= {\cal P} \check R(\lambda)  =  e^{\lambda} {\mathrm G}^+ - e^{-\lambda } {\mathrm G}^{-},$
which is a solution of the Yang-Baxter equation.
Moreover, by means of (\ref{interbq}) and the $R$-matrix twist we conclude that
\begin{eqnarray}
&& R(\lambda_1-\lambda_2)\Delta(\zeta) = \Delta^{(op)}(\zeta) R_q(\lambda_1 -\lambda_2)~~~~\zeta \in 
\{ e_{j,j+1}, e_{j+1,j}, q^{e_{j,j}},  q^{e_{n,n}}\}, \label{q11} \\ && j\in \{1, \ldots n-1 \}  \cr
&&  R(\lambda_1-\lambda_2)\Delta(\zeta_n;\lambda_1, \lambda_2) = \Delta^{(op)}(\zeta_n; \lambda_1, \lambda_2)
 R(\lambda_1 -\lambda_2), ~~~\zeta_n\in \{e_{n,1}, e_{1,n}\},\label{q12}
\end{eqnarray}
where $\Delta(a) = F^{-1} \Delta_q(a)F,$ $~\forall a \in {\mathfrak U}(\widehat{\mathfrak{gl}_n}).$ 
From (\ref{q11}) and recalling that $\check R = {\cal P} R$ we deduce
\begin{eqnarray}
&& \check R(\lambda_1-\lambda_2)\Delta(\zeta) = \Delta(\zeta) \check R(\lambda_1 -\lambda_2)~~~~\zeta \in 
\{ e_{j,j+1}, e_{j+1,j}, q^{e_{j,j}},  q^{e_{n,n}} \}, \label{q21} \\ && j\in \{1, \ldots, n-1 \}, \nonumber
\end{eqnarray}
i.e. the $\check R$-matrix is ${\mathfrak U}_q({\mathfrak{gl}_n})$-invariant as is the $\check R_q$-matrix.
\end{rem}

\subsection*{The $q$-Lyubashenko solution}
We focus in the end of this section on a special example, the $q$-deformed analogue of Lyubashenko's solution introduced in \cite{DoiSmo2}. 
The analysis for this special class of $q$ set-theoretic like solutions is 
compatible with Remark \ref{specialcase} and Example \ref{cor1L}.

Let $\tau, \sigma: X \to X$ be isomorprhisms, 
such that $\sigma(\tau(x)) = \tau(\sigma(x)) = x$ and let ${\mathbb V}= \sum_{x\in X}e_{x,\tau(x)},$
then  by direct computation it follows that, $\big [ {\mathbb V}\otimes {\mathbb V},\ {\mathrm g}\big]=0 $ 
provided that $sgn(x-y) = sgn\big (\sigma(x)- \sigma(y)\big )$. We then define ${\mathrm G} = (I\otimes {\mathbb V}^{-1})\ {\mathrm g}\  (I \otimes {\mathbb V}),$
which leads to
\begin{eqnarray}
{\mathrm G}=  \sum_{x, y \in X}\Big ( e_{x, \sigma(y)} \otimes e_{y, \tau(x)}  - q^{-sgn(x-\sigma(y))} e_{x,x} 
\otimes e_{y,y} \Big )+ qI\label{specialq}
\end{eqnarray}
i.e. the element ${\mathrm G}$ is obtained from  the ${\mathfrak U}_q(\mathfrak{gl}_n)$-invariant braid solution (\ref{braidq}), provided that $sgn(x-y) =  sgn\big (\sigma(x)- \sigma(y)\big )$ \cite{DoiSmo2}. 

An example compatible with the construction above is:  $\sigma(x) =x+1,\ \tau(x) =x-1$,  $x \in {\mathbb Z}$ 
(see also \cite{LAA}), \\

We recall  Example \ref{cor1L}, where  
we set ${\cal A}$ to be  ${\mathfrak U}_q(\widehat{\mathfrak{gl}_n})$, i.e. 
$\big( {\mathfrak U}_q(\widehat{\mathfrak{gl}_n}), \Delta, \epsilon, \Phi, {\cal R} \big )$ 
($\Phi = 1 \otimes 1 \otimes u^{-1}$) 
is a quasi-triangular quasi-bialgebra. 
We recall the evaluation representation 
$\pi_{\lambda}: {\mathfrak U}_q(\widehat{\mathfrak{gl}_n}) \to \mbox{End}({\mathbb C}^n),$ such that $ u \mapsto {\mathbb V},$  
$e_j \mapsto e_{i,j+1},$  $f_j \mapsto  e_{j+1,j}$ and $q^{\varepsilon_j} \mapsto ^{e_{j,j}}.$
Then the twist becomes $F = I \otimes {\mathbb V}$ and the twisted coproducts are
\begin{eqnarray}
\Delta(q^{e_{i,i}}) = q^{e_{i,i}} \otimes 
q^{e_{\tau(i), \tau(i)}}, ~~~~\Delta(\xi_j ) = \xi_j \otimes   q^{{h_{\tau(j)}\over 2}} + q^{-{h_j\over 2}} \otimes  \xi_{\tau(j)} . \label{symm2q}
\end{eqnarray}
 $h_j =\big (e_{j,j} -e_{j+1, j+1} \big )$, $h_{\tau(j)} = \big (e_{\tau(j), \tau(j)} -e_{\tau(j+1), \tau(j+1)}\big )$, for 
$\xi_j \in \Big  \{ e_{j, j+1},\ e_{j+1, j}\Big \}$,  
we define respectively: $\xi_{\tau(j)} \in \Big \{ e_{\tau(j), \tau(j+1)},\  e_{\tau(j+1), \tau(j)}\Big \}$.
We also recall that ${\mathrm g}$ (\ref{braidq})  is $\mathfrak{U}_q(\mathfrak{gl}_n)$-invariant, i.e. 
$\big [ {\mathrm g},\  \Delta_q(Y) \big ] =0,$  $~Y \in\Big  \{ e_{j, j+1},\ e_{j+1,j},\ q^{e_{j,j}} \Big \} $ and  
the co-products $\Delta_q$ of the algebra elements are given in (\ref{copba}), (\ref{copbb}) and (\ref{notation2q}).
Then it follows that the element ${\mathrm G}$  (\ref{specialq}) is also ${\mathfrak U}_q(\mathfrak{gl}_n)$ symmetric \cite{DoiSmo2}, i.e.
$\big [ {\mathrm G},\  \Delta(Y) \big ] =0,$
where  the modified co-products are given in (\ref{symm2q}). Naturally the Baxterized 
$\check R$-matrix is also $\mathfrak{U}_q(\mathfrak{gl}_n)$-invariant (\ref{q21}), (\ref{symm2q}).

As in the case of the Yangian  the admissible twist is $F= I \otimes {\mathbb V}$ 
and $\Phi = I \otimes I \otimes {\mathbb V}^{-1},$ then we have:
$F_{1,.23} = (\mbox{id} \otimes \Delta)F = I \otimes {\mathbb V} \otimes {\mathbb V}$ and $F_{12,3}^* = \big( (\Delta \otimes \mbox{id})F \big )\Phi^{-1}= I \otimes I \otimes {\mathbb V}^2,$
in accordance to the Conjecture \ref{conj1}.
Also, the $N$-fold twist is $ F_{12...N} = I \otimes {\mathbb V} \otimes {\mathbb V}^{2}\otimes \ldots \otimes {\mathbb V}^{(N-1)},$  (see also \cite{Doikoutw}).

\subsection*{Acknowledgments}

\noindent We  are grateful to A. Smoktunowicz and B. Rybolowicz for useful discussions.
Support from the EPSRC research grants EP/V008129/1 and EP/R009465/1 
is acknowledged. A.G. acknowledges 
support from Heriot-Watt University via a James Watt scholarship.

\end{document}